\documentclass[reqno,a4paper,draft]{amsart}

\usepackage{enumitem}
\setenumerate{label=\textnormal{(\arabic*)}}

\usepackage{amsmath,amssymb,dsfont,verbatim,bm,array,mathtools}
\usepackage[latin1]{inputenc}
\usepackage{booktabs}
\usepackage[raggedright]{titlesec}
\usepackage{mathtools}

\titleformat{\chapter}[display]
{\normalfont\huge\bfseries}{\chaptertitlename\\thechapter}{20pt}{\Huge}
\titleformat{\section}
{\normalfont\Large\bfseries\center}{\thesection}{1em}{}
\titleformat{\subsection}
{\normalfont\large\bfseries}{\thesubsection}{1em}{}
\titleformat{\subsubsection}[runin]
{\normalfont\normalsize\bfseries}{\thesubsubsection}{1em}{}
\titleformat{\paragraph}[runin]
{\normalfont\normalsize\bfseries}{\theparagraph}{1em}{}
\titleformat{\subparagraph}[runin]
{\normalfont\normalsize\bfseries}{\thesubparagraph}{1em}{}
\titlespacing*{\chapter} {0pt}{50pt}{40pt}
\titlespacing*{\section} {0pt}{3.5ex plus 1ex minus .2ex}{2.3ex plus .2ex}
\titlespacing*{\subsection} {0pt}{3.25ex plus 1ex minus .2ex}{1.5ex plus .2ex}
\titlespacing*{\subsubsection}{0pt}{3.25ex plus 1ex minus .2ex}{1.5ex plus .2ex}
\titlespacing*{\paragraph} {0pt}{3.25ex plus 1ex minus .2ex}{1em}
\titlespacing*{\subparagraph} {\parindent}{3.25ex plus 1ex minus .2ex}{1em}

\input xypic
\xyoption{all}


\subjclass[2000]{Primary 16W20, 16S32, 14R15.} 


\newtheorem{theorem}{Theorem}[section]
\newtheorem{lemma}[theorem]{Lemma}
\newtheorem{proposition}[theorem]{Proposition}

\newtheorem{conjecture}[theorem]{Conjecture}

\theoremstyle{definition}
\newtheorem{definition}[theorem]{Definition}

\theoremstyle{remark}
\newtheorem{remark}[theorem]{Remark}
\newtheorem{remarks}[theorem]{Remarks}

\DeclareMathOperator{\Jac}{Jac}




\begin{document}
\title{About Dixmier's conjecture}

\author{Vered Moskowicz}
\address{Department of Mathematics, Bar-Ilan University, Ramat-Gan 52900, Israel.}
\email{vered.moskowicz@gmail.com}
\thanks{The author was partially supported by an Israel-US BSF grant \#2010/149}

\begin{abstract}
The well-known Dixmier conjecture asks if every algebra endomorphism of the first Weyl algebra over a characteristic zero field is an automorphism.

We bring a hopefully easier to solve conjecture, called the $\gamma,\delta$ conjecture, and show that it is equivalent to the Dixmier conjecture.

Up to checking that in the group generated by automorphisms and anti-automorphisms of $A_1$ all the involutions belong to one conjugacy class, we show that: \begin{itemize}
\item Every involutive endomorphism from $(A_1,\gamma)$ to $(A_1,\delta)$ is an automorphism 
($\gamma$ and $\delta$ are two involutions on $A_1$).
\item Given an endomorphism $f$ of $A_1$ (not necessarily an involutive endomorphism), if one of $f(X)$,$f(Y)$ is symmetric or skew-symmetric (with respect to any involution on $A_1$), then $f$ is an automorphism.
\end{itemize}

\end{abstract}

\maketitle

\section{Introduction}
Throughout this paper, $K$ is a characteristic zero field. 
Let $A_1= A_1(K)=K \langle X,Y | YX-XY= 1 \rangle$ be the first Weyl algebra over $K$. 
Let $\alpha$ be the exchange involution on $A_1$ given by $\alpha(X)= Y$ and $\alpha(Y)= X$.
In \cite[Definition 1.1]{moskowicz valqui} the notion of an $\alpha$-endomorphism of $A_1$ was defined, namely, $f$ is an $\alpha$-endomorphism of $A_1$ if $f$ is an algebra endomorphism of $A_1$ such that $f \circ \alpha= \alpha \circ f$.
The $\alpha$-Dixmier conjecture ($\alpha-D_1$) was posed and proved: Every $\alpha$-endomorphism of $A_1$ is an $\alpha$-automorphism of $A_1$, \cite[Theorem 2.9]{moskowicz valqui}.
Actually, it was shown that every $\alpha$-endomorphism of $A_1$ is of the following form:
$$
f(x):= ax+ by+ \sum_{j= 0}^n S_j\quad\text{and}\quad f(y):= ay+ bx+ \sum_{j= 0}^n S_j.
$$
where $n\in \mathds{N}_0$, $a,b,c_0,\dots,c_n\in K$, with $a^2 - b^2 = 1$, and the $S_j$'s are defined as follows
$S_j:=c_{j}(X-Y)^{2j}$.

Let $K[x,y]$ be the polynomial ring over $K$ and let $\alpha$ be the exchange involution on $K[x,y]$ given by $\alpha(x)= y$ and $\alpha(y)= x$.
The $\alpha$-Jacobian conjecture ($\alpha-JC_{2}$) was posed and proved: If $f:K[x,y]\to K[x,y]$ is an $\alpha$-morphism ($f \circ \alpha= \alpha \circ f$) that satisfies
$\Jac(f(x),f(y))=1$,
then $f$ is invertible, \cite[Proposition 4.1]{moskowicz valqui}.

It is unknown whether $\alpha-JC_{2}$ implies $\alpha-D_1$, as was mentioned there: ``In the previous section we proved the starred Dixmier conjecture (in dimension 1)
and in this section we will prove
the starred Jacobian conjecture in dimension two,
however we do not know if
one can deduce the former directly from the latter as in the star-free case".

\begin{remark}\label{remark starred alpha}
The exchange involution $\alpha$ may be denoted by $X^*= Y$ and $Y^*= X$ instead of by $\alpha(X)= Y$ and $\alpha(Y)= X$; the first notaion justifies the name ``starred Dixmier conjecture", while the second notation justifies the name ``$\alpha$-Dixmier conjecture".
\end{remark}

\section{Equivalence of the Dixmier conjecture and the $\gamma,\delta$ conjecture for $A_1$ (first version)}

Of course, by an $\alpha$-anti-endomorphism of $A_1$ we mean an anti-endomorphism of $A_1$ that commutes with $\alpha$.
The following proposition will be used in our results.

\begin{proposition}\label{alpha anti-endomorphisms are onto}
Let $f$ be an $\alpha$-anti-endomorphism of $A_1$. Then $f$ is an $\alpha$-anti-automorphism of $A_1$.
In fact, $f$ is of the following form:
$$
f(x):= ax+ by+ \sum_{j= 0}^n S_j\quad\text{and}\quad f(y):= ay+ bx+ \sum_{j= 0}^n S_j.
$$
where $n\in \mathds{N}_0$, $a,b,c_0,\dots,c_n\in K$, with $a^2 - b^2 = -1$, and the $S_j$'s are defined as follows
$S_j:=c_{j}(X-Y)^{2j}$.
\end{proposition}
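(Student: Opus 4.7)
My approach would be to reduce the statement to the already-proved $\alpha$-Dixmier theorem \cite[Theorem 2.9]{moskowicz valqui} by composing with $\alpha$. The key observation is that $\alpha$, despite being called an involution, is in fact an \emph{anti-}automorphism of $A_1$: the exchange $X \leftrightarrow Y$ respects the Weyl relation $YX - XY = 1$ only if it simultaneously reverses multiplication, since $\alpha(X)\alpha(Y) - \alpha(Y)\alpha(X) = YX - XY$ precisely when $\alpha$ is an anti-homomorphism. In particular, the composition of any anti-endomorphism of $A_1$ with $\alpha$ is an ordinary algebra endomorphism.

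Given an $\alpha$-anti-endomorphism $f$, I would set $g := f \circ \alpha$ and first verify that $g$ is an $\alpha$-endomorphism of $A_1$. That $g$ is an ordinary algebra endomorphism follows from the paragraph above, and the $\alpha$-commutativity $g \circ \alpha = \alpha \circ g$ is a one-line consequence of $f \circ \alpha = \alpha \circ f$ together with $\alpha^2 = \ide$. Invoking \cite[Theorem 2.9]{moskowicz valqui}, $g$ must take the explicit form
\[
g(X) = aX + bY + \sum_{j=0}^n c_j (X-Y)^{2j}, \qquad g(Y) = aY + bX + \sum_{j=0}^n c_j (X-Y)^{2j},
\]
with $a^2 - b^2 = 1$, and is in particular an $\alpha$-automorphism.

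To read off $f$ itself, I would use $f = g \circ \alpha$ (which holds since $\alpha^2 = \ide$) together with the fact that $\alpha$ fixes each $(X-Y)^{2j}$ (because $\alpha(X-Y) = -(X-Y)$ and the exponents are even, so as an anti-automorphism $\alpha$ sends $(X-Y)^{2j}$ to itself). This yields $f(X) = g(Y) = aY + bX + \sum c_j (X-Y)^{2j}$ and $f(Y) = g(X) = aX + bY + \sum c_j (X-Y)^{2j}$. Relabeling the scalars by the swap $a \leftrightarrow b$ brings these formulas into exactly the form asserted in the proposition, and the constraint $a^2 - b^2 = 1$ becomes $a^2 - b^2 = -1$ under that swap. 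Finally, since $g$ is an automorphism and $\alpha$ is an anti-automorphism, $f = g \circ \alpha$ is itself an anti-automorphism, as required.

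The hard work is already absorbed into \cite[Theorem 2.9]{moskowicz valqui}; what remains for the present proposition is essentially bookkeeping, and I do not anticipate any real obstacle. The three ingredients that drive the reduction — $\alpha$ is an anti-automorphism, $\alpha$ fixes each $(X-Y)^{2j}$, and the $a \leftrightarrow b$ swap flips the sign in $a^2 - b^2$ — are all one-line verifications.
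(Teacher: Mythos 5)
Your proof is correct, but it takes a genuinely different route from the paper. The paper's own proof of this proposition is a single sentence asserting that the result ``can be obtained in a similar way'' to the classification of $\alpha$-endomorphisms in \cite{moskowicz valqui} --- i.e., it implicitly asks the reader to rerun that entire classification with anti-endomorphisms in place of endomorphisms. You instead give a genuine reduction to the already-proved theorem: since $\alpha$ is itself an anti-automorphism (the exchange $X\leftrightarrow Y$ is incompatible with $YX-XY=1$ as a homomorphism, since it would force $-1=1$, but is compatible as an anti-homomorphism), the composite $g:=f\circ\alpha$ is an ordinary endomorphism; it satisfies $g\alpha=\alpha g$ because $f\alpha=\alpha f$ and $\alpha^2=\ide$; so \cite[Theorem 2.9]{moskowicz valqui} applies verbatim, and transporting the explicit form of $g$ back through $f=g\circ\alpha$ (so $f(X)=g(Y)$, $f(Y)=g(X)$) and relabeling $a\leftrightarrow b$ gives exactly the asserted form, with the constraint flipping from $a^2-b^2=1$ to $a^2-b^2=-1$. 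Each step you flag as a one-liner really is one ($\alpha$ is an anti-automorphism, $\alpha$ fixes $(X-Y)^{2j}$, the sign flip under the swap), and the surjectivity claim is immediate since $f$ is the composite of the automorphism $g$ with the anti-automorphism $\alpha$. What your approach buys is that none of the hard analysis in \cite{moskowicz valqui} needs to be repeated or adapted; the paper's one-line proof, by contrast, leaves the reader to verify that every step of that analysis survives the passage to anti-endomorphisms. Your argument could profitably replace the proof given in the paper.
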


Notice that in the form of an $\alpha$-endomorphism we have $a^2 - b^2 = 1$, while in the form of an $\alpha$-anti-endomorphism we have $a^2 - b^2 = -1$.

\begin{proof}
This result can be obtained in a similar way to the way the result concerning $\alpha$-endomorphisms was obtained in \cite{moskowicz valqui}.
\end{proof}

In \cite{moskowicz valqui} only two involutions were considered: $\alpha$ and $\beta$, where $\beta$ is given by $\beta(X)=X$ and
$\beta(Y)= -Y$. 
Clearly, $\beta$ is conjugate to $\alpha$ by $\varphi$: $\beta:=\varphi^{-1}\circ \alpha \circ \varphi$,
where $\varphi(X):=\displaystyle\frac{X+Y}2$ and
$\varphi(Y):= Y-X$.
Here we shall consider more involutions:

\begin{definition}[A $\gamma,\delta$-endomorphism]\label{definition gamma,delta endo}
Let $f$ be an endomorphism of $A_1$ and let $\gamma$ and $\delta$ be any two involutions on $A_1$.
We say that $f$ is \begin{itemize}
\item a $\gamma$-endomorphism of $A_1$, if $f \circ \gamma= \gamma \circ f$.
\item a $\gamma,\delta$-endomorphism of $A_1$, if $f \circ \gamma= \delta \circ f$.
\end{itemize}
(If $\delta= \gamma$, then a $\gamma,\gamma$-endomorphism is just a $\gamma$-endomorphism).
\end{definition}

\begin{remarks}\label{remarks after definition}
\begin{itemize}
\item [(1)] In the above definition we are not assuming that the involutions $\gamma$ and $\delta$ are conjugate to $\alpha$ by an automorphism of $A_1$ or by an anti-automorphism of $A_1$.
However, in the first version of the $\gamma,\delta$ conjecture \ref{first version of the gamma,delta conjecture}, 
we are assuming that each of $\gamma$ and $\delta$ is conjugate to $\alpha$.

\item [(2)] Every automorphism $f$ of $A_1$ is a $\gamma,\delta$-endomorphism (with each of $\gamma$ and $\delta$ conjugate to $\alpha$); just take $\gamma= f^{-1} \alpha f$ , $\delta= \alpha$ and get $f \gamma= f(f^{-1} \alpha f)= \alpha f= \delta f$.

\item [(3)] If $f$ is a $\gamma,\delta$-endomorphism, then $\gamma$ and $\delta$ are not necessarily unique such that 
$f \circ \gamma= \delta \circ f$. 
For example: If $f$ is an automorphism of $A_1$, then the following are legitimate candidates for $\gamma$ and $\delta$:
\begin{itemize}
\item [] $\gamma= f^{-1} \alpha f$, $\delta= \alpha$.
\item [] $\gamma= f^{-2} \alpha f^2$, $\delta= f^{-1} \alpha f$.
\item [] $\gamma= f \alpha f^{-1}$, $\delta= f^2 \alpha f^{-2}$.
\end{itemize}
It is easy to see that there exist infinitely many other options for $\gamma$ and $\delta$.

\end{itemize}
\end{remarks}

\begin{conjecture}[The $\gamma,\delta$ conjecture (first version)]\label{first version of the gamma,delta conjecture}
For every endomorphism $f$ of $A_1$, there exist involutions $\gamma$ and $\delta$, each of $\gamma$ and $\delta$ is conjugate to $\alpha$, such that $f$ is a $\gamma,\delta$-endomorphism of $A_1$.
\end{conjecture}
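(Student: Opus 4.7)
The strategy I would adopt splits into an easy direction and a hard direction, reflecting the title of the section (equivalence with the Dixmier conjecture). The easy route is to put $\gamma=\alpha$ and to manufacture $\delta$ from the image $f(A_1)$: since $A_1$ is a simple $K$-algebra, every endomorphism $f$ is automatically injective, so $f$ induces an isomorphism $A_1\cong f(A_1)$. Transporting $\alpha$ along this isomorphism produces an involution $\wt\alpha$ on the subalgebra $f(A_1)$, defined by $\wt\alpha(f(u)):=f(\alpha(u))$. By construction $f\circ\alpha=\wt\alpha\circ f$, so if $\wt\alpha$ extends to an involution $\delta$ of the whole of $A_1$ which is conjugate to $\alpha$, the conjecture is proved with the choice $\gamma=\alpha$.

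When $f$ is already an automorphism there is nothing to do: Remark~\ref{remarks after definition}(2) exhibits $\gamma=f^{-1}\alpha f$, $\delta=\alpha$ as a valid witness. The entire content of the conjecture therefore lies in the non-surjective case. For a general $f$, I would try to exploit the explicit normal form of $\alpha$-endomorphisms recalled in the introduction, together with Proposition~\ref{alpha anti-endomorphisms are onto} for the anti-case, in order to build automorphisms or anti-automorphisms $g,h$ of $A_1$ such that the composite $h\circ f\circ g^{-1}$ commutes with $\alpha$. Once this is achieved, the already-proved $\alpha$-Dixmier theorem of \cite{moskowicz valqui} forces $h\circ f\circ g^{-1}$ to be an $\alpha$-automorphism, whence $\gamma:=g^{-1}\alpha g$ and $\delta:=h^{-1}\alpha h$ are conjugate to $\alpha$ and satisfy $f\circ\gamma=\delta\circ f$, as required.

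This reformulation makes transparent in advance why the statement should turn out to be equivalent to the Dixmier conjecture: producing $(\gamma,\delta)$ is the same thing as producing a conjugating pair $(g,h)$ that normalises $f$ into an $\alpha$-endomorphism, and any $\alpha$-endomorphism is automatically an automorphism by the already-cited result. The converse implication (Dixmier~$\Rightarrow$~$\gamma,\delta$ conjecture) is immediate from Remark~\ref{remarks after definition}(2), since every automorphism admits a trivial witness.

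The step I expect to be the genuine obstacle is precisely the production of $(g,h)$, or equivalently the extension of $\wt\alpha$ from $f(A_1)$ to all of $A_1$ by a conjugate of $\alpha$. Without prior knowledge that $f$ is surjective there is no canonical candidate, and recognising an arbitrary endomorphism of $A_1$ as a composite of (anti-)automorphisms with an $\alpha$-endomorphism is, on current evidence, of the same depth as Dixmier itself. A complete unconditional proof would presumably require either settling Dixmier outright or a clever ad hoc choice of $(\gamma,\delta)$ exploiting invariants of $f(X)$ and $f(Y)$ (for instance symmetry properties of the kind exploited in the second bullet of the abstract) that I have not yet been able to identify; accordingly, the realistic target of this section is the equivalence with $D_1$ rather than an unconditional proof.
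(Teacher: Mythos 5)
The statement you were asked to prove is labelled a \emph{conjecture} in the paper, and the paper contains no proof of it: it only establishes (Lemma \ref{gamma,delta is onto} and Theorem \ref{dixmier iff gamma,delta first version}) that the conjecture is equivalent to the Dixmier conjecture, together with sufficient conditions (the extension and restriction conditions) under which a particular $f$ admits a witness pair $(\gamma,\delta)$. Your proposal correctly declines to claim a proof and reconstructs essentially the same machinery: your transported involution $\wt\alpha$ on $f(A_1)$ is the paper's $\gamma$ on $T$, your extension requirement is the paper's extension condition (Theorem \ref{theorem of extension condition first version}), and your observation that producing the pair $(g,h)$ is of the same depth as Dixmier is precisely the content of the paper's equivalence theorem. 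There is therefore no gap relative to the paper; neither you nor the paper proves the statement, since an unconditional proof would settle the Dixmier conjecture itself.
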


The following lemma shows why it is good for an endomorphism of $A_1$ to be a $\gamma,\delta$-endomorphism, where each of $\gamma$ and $\delta$ is conjugate to $\alpha$.

\begin{lemma}\label{gamma,delta is onto}
Let $f$ be an endomorphism of $A_1$.
Then: $f$ is a $\gamma,\delta$-endomorphism of $A_1$, where each of $\gamma$ and $\delta$ is conjugate to $\alpha$ $\Longleftrightarrow$ $f$ is an automorphism of $A_1$.
\end{lemma}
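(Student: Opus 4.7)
My plan is to reduce the statement to the already-proven $\alpha$-Dixmier theorem and its anti-endomorphism analogue (Proposition \ref{alpha anti-endomorphisms are onto}) by a conjugation trick.

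The easy direction $(\Leftarrow)$ is already done for us: if $f$ is an automorphism, then Remark \ref{remarks after definition}(2) exhibits $\gamma := f^{-1}\alpha f$ and $\delta := \alpha$ as witnesses, each of which is (tautologically) conjugate to $\alpha$. So the content is in $(\Rightarrow)$.

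Suppose therefore that $f$ is a $\gamma,\delta$-endomorphism with $\gamma = g^{-1}\alpha g$ and $\delta = h^{-1}\alpha h$, where each of $g$ and $h$ lies in the group generated by automorphisms and anti-automorphisms of $A_1$; that is, each is either an automorphism or an anti-automorphism. Substituting into $f\circ\gamma = \delta\circ f$ and rearranging gives
\[
(h\circ f\circ g^{-1})\circ \alpha \;=\; \alpha \circ (h\circ f\circ g^{-1}).
\]
Set $\tilde f := h\circ f\circ g^{-1}$. A short parity check on the ``homomorphism vs. anti-homomorphism'' type of each factor shows that $\tilde f$ is an endomorphism of $A_1$ precisely when $g$ and $h$ are of the same type (both automorphisms or both anti-automorphisms), and an anti-endomorphism of $A_1$ otherwise. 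In the first case the $\alpha$-Dixmier theorem \cite[Theorem 2.9]{moskowicz valqui} forces $\tilde f$ to be an ($\alpha$-)automorphism; in the second case Proposition \ref{alpha anti-endomorphisms are onto} forces $\tilde f$ to be an ($\alpha$-)anti-automorphism. Either way $\tilde f$ is bijective.

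Finally, $f = h^{-1}\circ \tilde f\circ g$ is a composition of three bijections each of which is an automorphism or an anti-automorphism. A case analysis over the four possible type assignments for $(g,h)$ shows that the total number of anti-homomorphism factors is always even, so $f$ is an automorphism of $A_1$. The only real subtlety in the whole argument is this bookkeeping that cleanly separates the endomorphism and anti-endomorphism sub-cases; after that, the $\alpha$-results we already have do all of the work, and the implicit assumption being exploited is precisely the ``all involutions in one conjugacy class'' hypothesis emphasised in the abstract.
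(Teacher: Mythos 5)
Your proof is correct and follows essentially the same route as the paper: the easy direction via Remark \ref{remarks after definition}(2), and the hard direction by conjugating to $\tilde f = h\circ f\circ g^{-1}$, splitting on the parity of the types of $g$ and $h$, and invoking \cite[Theorem 2.9]{moskowicz valqui} in the endomorphism case and Proposition \ref{alpha anti-endomorphisms are onto} in the anti-endomorphism case. One minor point: for this first-version lemma the conjugacy of $\gamma$ and $\delta$ to $\alpha$ is part of the hypothesis, so no appeal to the ``all involutions lie in one conjugacy class'' assumption is actually needed here.
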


\begin{proof}
$\Longleftarrow$: This is just the second remark of Remarks \ref{remarks after definition}.

$\Longrightarrow$: $f$ is a $\gamma,\delta$-endomorphism of $A_1$, so $f \gamma= \delta f$.

There exists an automorphism or an anti-automorphism $g$ of $A_1$, such that $\gamma= g^{-1} \alpha g$, and there exists an automorphism or an anti-automorphism $h$ of $A_1$, such that $\delta= h^{-1} \alpha h$.


Therefore, $f \gamma= \delta f$ becomes $f g^{-1} \alpha g= h^{-1} \alpha h f$, 
so $(h f g^{-1}) \alpha= \alpha (h f g^{-1})$.

\begin{itemize}
\item If both $g$ and $h$ are automorphisms or anti-automorphisms, then $h f g^{-1}$ is an $\alpha$-endomorphism, hence by \cite[Theorem 2.9]{moskowicz valqui} $h f g^{-1}$ is an $\alpha$-automorphism.

Then obviously $f$ is an automorphism; indeed, $f= (h^{-1}h)f(g^{-1}g)= h^{-1}(hfg^{-1})g$, so $f$ is a composition of three automorphisms: $h^{-1}$, $hfg^{-1}$ and $g$, or $f$ is a composition of two anti-automorphisms $h^{-1}$ and $g$ and an automorphism $hfg^{-1}$.

\item If one of $g$ and $h$ is an automorphism and the other is an anti-automorphism, then $h f g^{-1}$ is an $\alpha$-anti-endomorphism, hence by Proposition \ref{alpha anti-endomorphisms are onto} $h f g^{-1}$ is an $\alpha$-anti-automorphism.

Then obviously $f$ is an automorphism (as a composition of two anti-automorphisms and one automorphism).
\end{itemize}
\end{proof}
Of course, our definition of a $\gamma,\delta$-endomorphism $f$ of $A_1$ is just another way to say that $f$ is an endomorphism from the ring with involution $(A_1,\gamma)$ to the ring with involution $(A_1,\delta)$ (since, by definition, an endomorphism from the ring with involution $(A_1,\gamma)$ to the ring with involution $(A_1,\delta)$ is an endomorphism of $A_1$ that satisfies $f \circ \gamma= \delta \circ f$).

So the non-trivial direction of Lemma \ref{gamma,delta is onto} just says the following:

\begin{theorem}[The involutive Dixmier conjecture is true (first version)]
Let $f$ be an endomorphism from the ring with involution $(A_1,\gamma)$ to the ring with involution $(A_1,\delta)$, where each of $\gamma$ and $\delta$ is conjugate to $\alpha$.
Then $f$ is an automorphism.
\end{theorem}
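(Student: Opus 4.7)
The plan is to observe that this theorem is precisely the non-trivial ($\Longrightarrow$) direction of Lemma~\ref{gamma,delta is onto} restated in the language of rings with involution, so the proof will follow the conjugation strategy already used there. The central idea is to conjugate the problem to the $\alpha$-setting, where the results of \cite{moskowicz valqui} and Proposition~\ref{alpha anti-endomorphisms are onto} can be applied directly.

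Concretely, I would first use the hypothesis to write $\gamma = g^{-1}\alpha g$ and $\delta = h^{-1}\alpha h$, where each of $g,h$ is either an automorphism or an anti-automorphism of $A_1$. Substituting into the intertwining relation $f\circ\gamma = \delta\circ f$ and rearranging yields
\[
(hfg^{-1})\circ\alpha \;=\; \alpha\circ(hfg^{-1}).
\]
Thus the composite $\tilde f := hfg^{-1}$ commutes with $\alpha$. The key step is then to determine whether $\tilde f$ is an endomorphism or an anti-endomorphism of $A_1$: if $g$ and $h$ are of the same type (both automorphisms or both anti-automorphisms), then $\tilde f$ is an $\alpha$-endomorphism, hence by \cite[Theorem 2.9]{moskowicz valqui} it is an $\alpha$-automorphism; if $g$ and $h$ are of opposite types, then $\tilde f$ is an $\alpha$-anti-endomorphism, hence by Proposition~\ref{alpha anti-endomorphisms are onto} it is an $\alpha$-anti-automorphism. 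Either way, $\tilde f$ is invertible.

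Finally, writing $f = h^{-1}\tilde f g$ expresses $f$ as a composition of three invertible maps (automorphisms and/or anti-automorphisms); the parity of anti-automorphisms in the composition is necessarily even (either zero or two), since $f$ is an endomorphism, so the composition is in fact an automorphism of $A_1$, as claimed.

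I do not anticipate a genuine obstacle here: the whole argument is a clean reduction to two already-established results. The only point that deserves attention is the bookkeeping of ``automorphism vs.\ anti-automorphism'' in the four sub-cases for the types of $g$ and $h$, and checking in each case that the parity of anti-automorphisms in $f = h^{-1}\tilde f g$ is consistent with $f$ being an endomorphism and hence forces $f$ to be an automorphism rather than an anti-automorphism.
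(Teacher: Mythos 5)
Your proposal is correct and follows essentially the same route as the paper: the paper presents this theorem as a restatement of the non-trivial direction of Lemma~\ref{gamma,delta is onto}, whose proof is exactly your conjugation argument --- write $\gamma=g^{-1}\alpha g$, $\delta=h^{-1}\alpha h$, deduce that $hfg^{-1}$ commutes with $\alpha$, and invoke \cite[Theorem 2.9]{moskowicz valqui} or Proposition~\ref{alpha anti-endomorphisms are onto} according to the types of $g$ and $h$. The concluding parity bookkeeping for $f=h^{-1}(hfg^{-1})g$ also matches the paper's case analysis.
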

The computations in the proof of Lemma \ref{gamma,delta is onto} are yielding the following trivial proposition. 
\begin{proposition}\label{gamma,delta iff hfg alpha}
Let $f$ be an endomorphism of $A_1$ and let $\gamma$ and $\delta$ be two involutions on $A_1$, 
such that $\gamma= g^{-1}\alpha g$ and $\delta= h^{-1}\alpha h$, where both $g$ and $h$ are automorphisms or both are anti-automorphisms or one of $g$ and $h$ is an automorphism and the other is an anti-automorphism.

Then the following statements are equivalent:\begin{itemize}
\item [(1)] $f$ is a $\gamma,\delta$-endomorphism of $A_1$.
\item [(2)] $h f g^{-1}$ is an $\alpha$-endomorphism (when both $g$ and $h$ are automorphisms or both are anti-automorphisms) or an $\alpha$-anti-endomorphism (when one of $g$ and $h$ is an automorphism and the other is an anti-automorphism).
\end{itemize}
\end{proposition}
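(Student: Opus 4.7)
The plan is to observe that this is a purely formal equivalence: both statements are obtained from each other by multiplying on the left by $h$ (resp.\ $h^{-1}$) and on the right by $g^{-1}$ (resp.\ $g$). Concretely, I would start from $f \circ \gamma = \delta \circ f$, substitute the given conjugations $\gamma = g^{-1}\alpha g$ and $\delta = h^{-1}\alpha h$, obtaining
$$ f g^{-1} \alpha g = h^{-1} \alpha h f, $$
and then compose on the left with $h$ and on the right with $g^{-1}$ (all of which are invertible since they are automorphisms or anti-automorphisms) to reach
$$ (hfg^{-1}) \circ \alpha = \alpha \circ (hfg^{-1}). $$
The reverse manipulation (compose with $h^{-1}$ on the left and $g$ on the right) shows the two equations are literally equivalent, so (1) and (2) express the same relation, once the map $hfg^{-1}$ is correctly classified as an endomorphism or anti-endomorphism.

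The second step is therefore the classification of $hfg^{-1}$. Here I would use the elementary parity rule that a composition of (anti-)homomorphisms is an anti-homomorphism iff the number of anti-homomorphisms in the composition is odd. Since $f$ is an endomorphism of $A_1$, and $g^{-1}$ has the same type as $g$, the map $hfg^{-1}$ is an endomorphism precisely when $g$ and $h$ are of the same type (both automorphisms or both anti-automorphisms), and an anti-endomorphism precisely when $g$ and $h$ are of opposite types. Combined with the commutation with $\alpha$ obtained in the first step, this gives exactly the dichotomy stated in (2): $hfg^{-1}$ is an $\alpha$-endomorphism in the same-type case and an $\alpha$-anti-endomorphism in the mixed-type case.

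There is no real obstacle to overcome; the statement is essentially a bookkeeping of what was already done inside the proof of Lemma \ref{gamma,delta is onto}, which is why the author calls it trivial. The only thing that requires minor care is keeping track of the endomorphism/anti-endomorphism type under composition, but this is a finite case check with the four combinations of types for $(g,h)$.
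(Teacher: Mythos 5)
Your proposal is correct and follows essentially the same route as the paper: the paper's proof is the single word ``Clear,'' pointing back to the computations inside the proof of Lemma \ref{gamma,delta is onto}, which consist of exactly the substitution $f g^{-1}\alpha g = h^{-1}\alpha h f$, the left/right composition to reach $(hfg^{-1})\alpha = \alpha(hfg^{-1})$, and the same parity classification of $hfg^{-1}$ by the types of $g$ and $h$. Nothing to add.
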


\begin{proof}
Clear.
\end{proof}

Given an endomorphism $f$ of $A_1$, each of the statements of Proposition \ref{gamma,delta iff hfg alpha} is equivalent to $f$ being an automorphism: \begin{itemize}

\item [(1)] Lemma \ref{gamma,delta is onto} shows that being a $\gamma,\delta$-endomorphism of $A_1$ (with each of $\gamma$ and $\delta$ conjugate to $\alpha$) is equivalent to being an automorphism of $A_1$.

\item [(2)] First option: $h f g^{-1}$ is an $\alpha$-endomorphism (both $g$ and $h$ are automorphisms or both are anti-automorphisms), then from \cite[Theorem 2.9]{moskowicz valqui} $h f g^{-1}$ is an $\alpha$-automorphism, hence $f$ is an automorphism.

Second option: $h f g^{-1}$ is an $\alpha$-anti-endomorphism (one of $g$ and $h$ is an automorphism and the other is an anti-automorphism), then from Proposition \ref{alpha anti-endomorphisms are onto} $h f g^{-1}$ is an $\alpha$-anti-automorphism, hence $f$ is an automorphism.

The opposite direction is trivial: If $f$ is an automorphism of $A_1$, then taking $g=f$ and $h=1$ yields
$h f g^{-1}= 1 f f^{-1}= 1$ which is an $\alpha$-endomorphism.

\end{itemize}

In other words, given an endomorphism $f$ of $A_1$, in order to get a positive answer to the Dixmier conjecture, it is sufficient to show one of the following:\begin{itemize}
\item [(1)] $f$ is a $\gamma,\delta$-endomorphism of $A_1$ (with each of $\gamma$ and $\delta$ conjugate to $\alpha$).

\item [(2)] There exist $g$ and $h$ both are automorphisms or both are anti-
automorphisms of $A_1$ such that $h f g^{-1}$ is an $\alpha$-endomorphism of $A_1$; 

or there exist $g$ and $h$, one of $g$ and $h$ is an automorphism of $A_1$ and the other is an anti-automorphism of $A_1$ such that $h f g^{-1}$ an $\alpha$-anti-endomorphism of $A_1$.
\end{itemize}

Summarizing, the connection between the Dixmier conjecture and the $\gamma,\delta$ conjecture (first version) is as follows:
\begin{theorem}[Equivalence of the Dixmier conjecture and the $\gamma,\delta$ conjecture (first version)]\label{dixmier iff gamma,delta first version}
The Dixmier conjecture is true $\Longleftrightarrow$ The $\gamma,\delta$ conjecture (first version) is true.
\end{theorem}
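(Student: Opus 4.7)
The proof is essentially immediate once we unpack the two directions against the tools already assembled, namely Lemma \ref{gamma,delta is onto} and item (2) of Remarks \ref{remarks after definition}. The plan is to split the biconditional and dispatch each direction by quoting these results.

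For the direction $(\Rightarrow)$, I would assume the Dixmier conjecture and take an arbitrary endomorphism $f$ of $A_1$. By hypothesis $f$ is already an automorphism, so the construction in item (2) of Remarks \ref{remarks after definition} applies verbatim: setting $\gamma := f^{-1}\alpha f$ and $\delta := \alpha$ gives two involutions on $A_1$, each conjugate to $\alpha$ (trivially for $\delta$, and via the automorphism $f$ itself for $\gamma$), and the computation $f\gamma = f f^{-1}\alpha f = \alpha f = \delta f$ shows that $f$ is a $\gamma,\delta$-endomorphism. Hence the $\gamma,\delta$ conjecture (first version) holds.

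For the direction $(\Leftarrow)$, I would assume the $\gamma,\delta$ conjecture (first version) and take an arbitrary endomorphism $f$ of $A_1$. By hypothesis there exist involutions $\gamma,\delta$ on $A_1$, each conjugate to $\alpha$, such that $f$ is a $\gamma,\delta$-endomorphism. This is exactly the hypothesis of the nontrivial direction of Lemma \ref{gamma,delta is onto}, so that lemma yields that $f$ is an automorphism of $A_1$. Since this holds for every endomorphism $f$, the Dixmier conjecture is true.

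There is no real obstacle here: both directions are short bookkeeping arguments once Lemma \ref{gamma,delta is onto} is in hand. If anything, the only point that deserves a brief sentence is observing that in $(\Rightarrow)$ the involution $\gamma = f^{-1}\alpha f$ is genuinely conjugate to $\alpha$ by an automorphism (which it is, since $f$ is assumed to be an automorphism by Dixmier), so that $\gamma$ is a legitimate choice for the first version of the conjecture, which insists on conjugation by automorphisms or anti-automorphisms of $A_1$. After that remark the proof reduces to \emph{Clear}, in the spirit of Proposition \ref{gamma,delta iff hfg alpha}.
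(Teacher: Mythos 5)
Your proposal is correct and follows essentially the same route as the paper: the forward direction is handled by the explicit construction $\gamma=f^{-1}\alpha f$, $\delta=\alpha$ from item (2) of Remarks \ref{remarks after definition} (which is exactly the easy direction of Lemma \ref{gamma,delta is onto}), and the reverse direction is the nontrivial direction of that same lemma. Your extra remark that $\gamma$ is genuinely conjugate to $\alpha$ by the automorphism $f$ is a sensible clarification but does not change the argument.
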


\begin{proof}
$\Longrightarrow$: Let $f$ be an endomorphism of $A_1$. We must find involutions $\gamma$ and $\delta$, each of $\gamma$ and $\delta$ is conjugate to $\alpha$, such that $f \circ \gamma= \delta \circ f$.
The Dixmier conjecture is true, hence $f$ is an automorphism, so use Lemma \ref{gamma,delta is onto}.

$\Longleftarrow$: Let $f$ be an endomorphism of $A_1$. We must show that $f$ is an automorphism of $A_1$.
The $\gamma,\delta$ conjecture (first version) is true, hence there exist involutions $\gamma$ and $\delta$, each of $\gamma$ and $\delta$ is conjugate to $\alpha$, such that $f$ is a $\gamma,\delta$-endomorphism of $A_1$.
Now use Lemma \ref{gamma,delta is onto}. 
\end{proof}

And similarly,
\begin{conjecture}[The $g,h$ conjecture (first version)]\label{first version of the g,h conjecture}
For every endomorphism $f$ of $A_1$, there exist $g$ and $h$, both automorphisms, both anti-automorphisms or one is an automorphism and the other is an anti-automorphism, such that $h f g^{-1}$ is an $\alpha$-endomorphism (when both $g$ and $h$ are automorphisms or both are anti-automorphisms) or an $\alpha$-anti-endomorphism (when one of $g$ and $h$ is an automorphism and the other is an anti-automorphism).
\end{conjecture}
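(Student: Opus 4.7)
The plan is to attempt a direct construction of $g$ and $h$ from $f$. Since this conjecture is equivalent to Dixmier's, via the analogue of Theorem \ref{dixmier iff gamma,delta first version} combined with Proposition \ref{gamma,delta iff hfg alpha}, any unconditional proof must bring essentially new input, so what follows is a program rather than a fully specified argument.

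First I would reduce to the case $h=\ide$: it is enough to find an automorphism or anti-automorphism $g$ of $A_1$ such that $f\circ g^{-1}$ is an $\alpha$-endomorphism (respectively, $\alpha$-anti-endomorphism). Setting $P:=f(g^{-1}(X))$ and $Q:=f(g^{-1}(Y))$, this requires $\alpha(P)=Q$ together with the Weyl relation $QP-PQ=\pm 1$. Equivalently, we seek a pair $(X',Y')$ in the image of some $g^{-1}$, so $[Y',X']=\pm 1$, such that $\alpha$ exchanges $f(X')$ and $f(Y')$. If such a $g$ never exists with $h=\ide$, one should then relax by letting both $g$ and $h$ vary, which enlarges the search space by an action of $\Aut(A_1)\cup(\text{anti-automorphisms})$ on both sides.

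Next I would parametrize. By Dixmier's theorem, $\Aut(A_1)$ is tame and generated by affine and Jonqui\`ere-type automorphisms, so one writes the candidate $g$ as a product of such generators and reads the requirement $\alpha(f(X'))=f(Y')$ as a polynomial system in their coefficients. To make this system accessible, pass to the associated graded with respect to the Bernstein filtration, where $A_1$ becomes $K[x,y]$ and the symbols of $f(X)$ and $f(Y)$ form a commuting pair. The $\alpha$-Jacobian conjecture $\alpha-JC_2$ of \cite{moskowicz valqui} can then be applied on the symbolic level, after twisting by a well-chosen $g$, to force the symbol pair into $\alpha$-symmetric form. One would then try to lift this symbolic normalization to $A_1$ itself by induction on the filtration, at each stage composing $f\circ g^{-1}$ with a tame automorphism that preserves the leading symbol but kills the next lower-order error.

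The hard part will be exactly this inductive correction. The $\alpha$-endomorphism classification, which forces $f(X)=aX+bY+\sum c_j(X-Y)^{2j}$ with $a^2-b^2=\pm 1$ by \cite[Theorem 2.9]{moskowicz valqui} and Proposition \ref{alpha anti-endomorphisms are onto}, is extremely rigid, so matching $f\circ g^{-1}$ to this template imposes infinitely many compatibility conditions whose solvability for a generic $f$ has no obvious mechanism. This is precisely the wedge between the $g,h$ conjecture and what is already established; bridging it is equivalent to Dixmier. A more realistic line of attack would combine the present statement with the conjugacy analysis of involutions inside the group generated by $\Aut(A_1)$ and the anti-automorphisms, alluded to in the abstract: if all involutions form a single conjugacy class, then the two-parameter family of pairs $(\gamma,\delta)$ collapses to a single orbit, and the search for $(g,h)$ reduces to a normal-form matching problem on one representative pair.
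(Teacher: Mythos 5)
This statement is a \emph{conjecture} in the paper: the author offers no proof of it, only the equivalence with the Dixmier conjecture (Theorem \ref{dixmier iff g,h first version}, via Proposition \ref{gamma,delta iff hfg alpha} and \cite[Theorem 2.9]{moskowicz valqui} together with Proposition \ref{alpha anti-endomorphisms are onto}). Your proposal, by your own admission, is a program rather than a proof, so the bottom line is that there is a genuine gap --- namely the entire substance of the claim, which you correctly identify as being exactly as hard as Dixmier. That part of your write-up is honest and accurate, and your framing of the problem (reduce to $h=\ide$ where possible, view the condition as $\alpha(f(X'))=f(Y')$ for a generating pair $X',Y'$ in the image of $g^{-1}$, and try to normalize by tame automorphisms) is a reasonable way to think about where the difficulty sits.

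Two concrete steps in the program would, however, fail as stated. First, the proposal to ``apply $\alpha$-$JC_2$ on the symbolic level'' after passing to the associated graded of the Bernstein filtration does not get off the ground: for a hypothetical non-surjective endomorphism $f$, the leading symbols of $P=f(X)$ and $Q=f(Y)$ are (up to scalars) powers of a common homogeneous form, so their Jacobian in $K[x,y]$ is $0$, not $1$; the hypothesis $\Jac(f(x),f(y))=1$ needed for \cite[Proposition 4.1]{moskowicz valqui} is simply unavailable at the symbol level, and no twist by an automorphism $g$ repairs this, since $g$ acts by a Jacobian-$1$ change of variables. Second, the closing suggestion --- that if all involutions in the group generated by automorphisms and anti-automorphisms lie in one conjugacy class then the search ``reduces to a normal-form matching problem'' --- conflates the paper's second-version reductions with a solution: single-conjugacy-class collapse only removes the conjugacy hypotheses from the first-version statements (as in Section 3 of the paper); it does not produce the pair $(g,h)$ for a given $f$, and the matching problem that remains is again equivalent to Dixmier. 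The one mechanism that is genuinely missing, and which you flag yourself, is the inductive lower-order correction; nothing in the paper or in the cited results supplies it.
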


\begin{theorem}[Equivalence of the Dixmier conjecture and the $g,h$ conjecture (first version)]\label{dixmier iff g,h first version}
The Dixmier conjecture is true $\Longleftrightarrow$ The $g,h$ conjecture (first version) is true.
\end{theorem}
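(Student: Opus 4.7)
The proof is a direct bookkeeping argument that parallels the proof of Theorem \ref{dixmier iff gamma,delta first version}, leveraging Proposition \ref{gamma,delta iff hfg alpha}, which has already identified the two conjectures (first version) as statement-by-statement equivalent for a fixed endomorphism $f$. So the plan is essentially to package this equivalence existentially.

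For the direction ``Dixmier $\Longrightarrow$ $g,h$ conjecture'', I would start with an arbitrary endomorphism $f$ of $A_1$. Assuming Dixmier, $f$ is an automorphism. I then exhibit the witnesses by taking $g:=f$ and $h:=\ide$ (both automorphisms); then $h f g^{-1}=f f^{-1}=\ide$, which is trivially an $\alpha$-endomorphism, and this is exactly the conclusion of the $g,h$ conjecture.

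For the direction ``$g,h$ conjecture $\Longrightarrow$ Dixmier'', let $f$ be an endomorphism of $A_1$; I want to show $f$ is an automorphism. Apply the $g,h$ conjecture to obtain $g,h$ (of one of the three allowed mixed types) so that $hfg^{-1}$ is an $\alpha$-endomorphism or $\alpha$-anti-endomorphism, according to the case. In the first situation, \cite[Theorem 2.9]{moskowicz valqui} upgrades $hfg^{-1}$ to an $\alpha$-automorphism, and in the second situation Proposition \ref{alpha anti-endomorphisms are onto} upgrades it to an $\alpha$-anti-automorphism. In either case, writing $f = h^{-1}(h f g^{-1}) g$ exhibits $f$ as a composition of three maps each of which is an automorphism or anti-automorphism of $A_1$.

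The only thing to be slightly careful about is parity: one should verify in all three sub-cases (both $g,h$ automorphisms; both anti-automorphisms; one of each with $hfg^{-1}$ an anti-map) that the composition $h^{-1}\circ(hfg^{-1})\circ g$ contains an even number of anti-automorphisms, hence lands in $\Aut(A_1)$. This is immediate: in the two ``same-type'' cases the middle factor is an automorphism and $h^{-1}, g$ are either both automorphisms or both anti-automorphisms (so the number of anti-maps is $0$ or $2$); in the mixed case the middle factor is an anti-automorphism and exactly one of $h^{-1},g$ is, giving again $2$ anti-maps. I don't anticipate any genuine obstacle, since after Proposition \ref{gamma,delta iff hfg alpha} the result is essentially a restatement; the only care needed is to phrase the quantifiers so that ``existence of $g,h$ for every $f$'' matches ``existence of $\gamma,\delta$ for every $f$'' via the bijection $\gamma= g^{-1}\alpha g$, $\delta= h^{-1}\alpha h$.
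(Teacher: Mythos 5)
Your proof is correct and follows essentially the same route the paper takes: the paper leaves this theorem without a displayed proof precisely because the itemized discussion after Proposition \ref{gamma,delta iff hfg alpha} already establishes both directions, using the same witnesses $g=f$, $h=\ide$ for the easy direction and the same upgrade via \cite[Theorem 2.9]{moskowicz valqui} and Proposition \ref{alpha anti-endomorphisms are onto} plus the decomposition $f=h^{-1}(hfg^{-1})g$ for the converse. Your explicit parity check on the number of anti-automorphisms is a welcome bit of extra care that the paper only gestures at.
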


\subsection{An extension condition and a restriction condition (first version)}

In view of the above results, our (hopefully possible) mission is to prove that the $\gamma,\delta$ conjecture (first version) is true or that the $g,h$ conjecture (first version) is true; Thus far, given an endomorphism $f$ of $A_1$, we only managed to find two conditions, an extension condition and a restriction condition, each implies that $f$ is a $\gamma,\delta$-endomorphism of $A_1$ (each of $\gamma$ and $\delta$ is conjugate to $\alpha$), hence an automorphism of $A_1$, see Theorem \ref{theorem of extension condition first version} and Theorem \ref{theorem of restriction condition first version}.

We have not yet managed to find a condition that implies directly that the $g,h$ conjecture (first version) is true: We only suggest that instead of considering a general endomorphism $f$ of $A_1$, one will consider an irreducible endomorphism $f$ of $A_1$, see \cite[Definition 3.1]{GGV.old} for the definition of an irreducible endomorphism. 

Namely, we suggest that for every irreducible endomorphism $f$ of $A_1$, one will find $g$ and $h$ such that $h f g^{-1}$ is an $\alpha$-endomorphism or an $\alpha$-anti-endomorphism of $A_1$. Hence, $h f g^{-1}$ is onto, so $f$ is an automorphism. Therefore, we have shown that every irreducible endomorphism of $A_1$ is an automorphism of $A_1$, concluding that the Dixmier conjecture is true, by changing a little \cite[Theorem 3.3]{GGV.old}.

Now, let $f$ be an endomorphism of $A_1$. 
{}From now on we will use the following notations: 
$P:= f(X)$ and $Q:= f(Y)$. So, $1= [f(Y),f(X)]= [Q,P]$.
The image of $f$, $K \langle P,Q | QP-PQ= 1 \rangle$, is a subalgebra of $A_1$ which is isomorphic to $A_1$; we will denote the image of $f$ by $T$.

We can define an involution $\gamma$ on $T$ by $\gamma(P)= Q$, $\gamma(Q)= P$ (and extended in the obvious way to all of $T$).

U. Vishne raised the following question, as a general question about involutions on $A_1$, without intending to connect it to a solution to the Dixmier conjecture: ``Given an endomorphism $f$ of $A_1$, with $P:= f(X)$ and $Q:= f(Y)$, 
can the involution $\gamma$ ($\gamma(P)= Q$, $\gamma(Q)= P$) on $T$ be extended to $A_1$?".

(If the given endomorphism $f$ of $A_1$ is already an automorphism of $A_1$, then $\gamma$ is already an involution on $A_1$).

We suggest to adapt Vishne's question in a way that connects it to a solution to the Dixmier conjecture.
This is done in Theorem \ref{theorem of extension condition first version}. 

\begin{definition}[The extension condition (first version)]
Let $f$ be an endomorphism of $A_1$. 
We say that $f$ satisfies the extension condition (first version) if the involution $\gamma$ ($\gamma(P)= Q$, $\gamma(Q)= P$) on $T$ can be extended to $A_1$,
where the extended involution is conjugate to $\alpha$.
\end{definition}

\begin{theorem}\label{theorem of extension condition first version}
Let $f$ be an endomorphism of $A_1$.
If $f$ satisfies the extension condition (first version), then $f$ is an automorphism of $A_1$.
\end{theorem}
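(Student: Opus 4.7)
The plan is to use the extension condition to reinterpret $f$ as a $\gamma,\delta$-endomorphism of $A_1$ with both involutions conjugate to $\alpha$, and then invoke Lemma \ref{gamma,delta is onto} directly.

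First, let $\tilde\gamma$ denote the involution on $A_1$ whose existence the extension condition asserts: by hypothesis $\tilde\gamma$ restricts to $\gamma$ on $T$ and is conjugate to $\alpha$. Evaluating $\tilde\gamma\circ f$ on the generators of $A_1$ gives
\[
\tilde\gamma(f(X))=\tilde\gamma(P)=\gamma(P)=Q=f(Y)=f(\alpha(X)),
\]
and symmetrically $\tilde\gamma(f(Y))=\tilde\gamma(Q)=P=f(X)=f(\alpha(Y))$. Both $f\circ\alpha$ and $\tilde\gamma\circ f$ are anti-homomorphisms $A_1\to A_1$ (each is the composition of a homomorphism with an involution), and they agree on the generators $X$ and $Y$, so they agree on all of $A_1$. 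Hence
\[
f\circ\alpha=\tilde\gamma\circ f,
\]
which is exactly the statement that $f$ is an $\alpha,\tilde\gamma$-endomorphism of $A_1$ in the sense of Definition \ref{definition gamma,delta endo}.

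Since $\alpha$ is trivially conjugate to itself (by the identity) and $\tilde\gamma$ is conjugate to $\alpha$ by hypothesis, the pair $(\alpha,\tilde\gamma)$ satisfies the hypotheses of Lemma \ref{gamma,delta is onto}, which then immediately yields that $f$ is an automorphism of $A_1$.

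There is essentially no obstacle: the extension condition was formulated precisely so that the defining relations $\gamma(P)=Q$, $\gamma(Q)=P$ on the image $T$ translate, once pulled back through $f$, into the intertwining identity $\tilde\gamma\circ f=f\circ\alpha$. The only thing to verify carefully is the compatibility of compositions with the anti-automorphism structure (so that matching on generators suffices to match on all of $A_1$), which is a routine check as indicated above.
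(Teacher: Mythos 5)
Your proof is correct and follows essentially the same route as the paper: verify $f\circ\alpha=\Gamma\circ f$ on the generators $X,Y$ using $\gamma(P)=Q$ and $\gamma(Q)=P$, conclude that $f$ is an $\alpha,\Gamma$-endomorphism with both involutions conjugate to $\alpha$, and invoke Lemma \ref{gamma,delta is onto}. Your added observation that both composites are anti-homomorphisms agreeing on generators, hence equal on all of $A_1$, is a welcome extra bit of rigor that the paper leaves implicit.
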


\begin{proof}
$f$ satisfies the extension condition (first version), so there exists an involution on $A_1$, call it $\Gamma$, such that
$\Gamma$ extends $\gamma$, and $\Gamma$ is conjugate to $\alpha$.

Therefore, we get that $f$ is an $\alpha,\Gamma$-endomorphism of $A_1$ ($\Gamma$ is conjugate to $\alpha$), because:
$(f \alpha)(X)= f(Y)= Q= \gamma(P)= \Gamma(P)= \Gamma(f(X))= (\Gamma f)(X)$ and
$(f \alpha)(Y)= f(X)= P= \gamma(Q)= \Gamma(Q)= \Gamma(f(Y))= (\Gamma f)(Y)$.

Hence, from Lemma \ref{gamma,delta is onto} we get that $f$ is an automorphism of $A_1$.
\end{proof}
We also suggest to consider the following ``restriction condition". 

\begin{definition}[The restriction condition (first version)]
Let $f$ be an endomorphism of $A_1$. 
We say that $f$ satisfies the restriction condition (first version) if the exchange involution $\alpha$ on $A_1$, when restricted to $T$, is an involution on $T$, and $\gamma$ is conjugate by an automorphism or by an anti-automorphism of $T$ to the restriction of 
$\alpha$ to $T$.
\end{definition}

Notice that, in general, the restriction of $\alpha$ to an affine subalgebra of $A_1$ need not be an involution on the affine subalgebra, since $\alpha$ may take the generators of the affine subalgebra outside it.

\begin{theorem}\label{theorem of restriction condition first version}
Let $f$ be an endomorphism of $A_1$.
If $f$ satisfies the restriction condition (first version), then $f$ is an automorphism of $A_1$.
\end{theorem}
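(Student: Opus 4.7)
The plan is to promote the data on $T$ to an $\alpha$-equivariance on all of $A_1$ by composing $f$ with the conjugating map $\phi$ supplied by the restriction condition, and then to read off surjectivity of $f$ from the two results already at our disposal, namely \cite[Theorem 2.9]{moskowicz valqui} and Proposition \ref{alpha anti-endomorphisms are onto}.

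First, unpack the restriction condition: it furnishes an automorphism or anti-automorphism $\phi$ of $T$ with $\gamma=\phi^{-1}\circ (\alpha|_T)\circ\phi$. Checking on the generators $X,Y$, exactly as in the proof of Theorem \ref{theorem of extension condition first version}, one sees directly that $f\circ\alpha=\gamma\circ f$, since $\gamma(P)=Q=f(\alpha(X))$ and $\gamma(Q)=P=f(\alpha(Y))$. Substituting the formula for $\gamma$ and applying $\phi$ on the left yields $(\phi\circ f)\circ\alpha=(\alpha|_T)\circ(\phi\circ f)$; as the image of $\phi\circ f$ lies in $T$ and $\alpha|_T$ is simply $\alpha$ restricted to $T$, this is the same as
\[
\alpha\circ(\phi\circ f)=(\phi\circ f)\circ\alpha
\]
viewed as maps $A_1\to A_1$.

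Now I split into two cases. When $\phi$ is an automorphism of $T$, the composite $\phi\circ f$ is an algebra endomorphism of $A_1$ that commutes with $\alpha$, i.e., an $\alpha$-endomorphism; by \cite[Theorem 2.9]{moskowicz valqui} it is an $\alpha$-automorphism of $A_1$, hence a bijection of $A_1$ onto itself. When $\phi$ is an anti-automorphism of $T$, the composite $\phi\circ f$ is an $\alpha$-anti-endomorphism of $A_1$, and Proposition \ref{alpha anti-endomorphisms are onto} makes it an $\alpha$-anti-automorphism, again a bijection. In both cases the image of $\phi\circ f$ equals $\phi(f(A_1))=\phi(T)=T$, since $\phi$ is a bijection of $T$ onto itself; comparing with the known surjectivity onto $A_1$ forces $T=A_1$. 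Consequently $\phi$ is a bijection of $A_1=T$, so $f=\phi^{-1}\circ(\phi\circ f)$ is a bijection $A_1\to A_1$, and since $f$ is already an algebra endomorphism, $f$ is an automorphism of $A_1$.

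The only point requiring genuine care is the identification $\alpha|_T=\alpha$ on the image of $\phi\circ f$: this depends on $\alpha(T)\subseteq T$, which is precisely the first half of the restriction condition. Beyond that the argument is a short chase through the definitions combined with the two surjectivity results already cited, and I do not expect a substantial obstacle.
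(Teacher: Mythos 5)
Your proof is correct and follows essentially the same route as the paper's: verify $f\circ\alpha=\gamma\circ f$ on the generators, conjugate by the map furnished by the restriction condition to obtain an $\alpha$-endomorphism or $\alpha$-anti-endomorphism, and invoke \cite[Theorem 2.9]{moskowicz valqui} or Proposition \ref{alpha anti-endomorphisms are onto}. The only difference is that you inline the argument of Lemma \ref{gamma,delta is onto} and explicitly handle the point that the conjugator is only an (anti-)automorphism of $T$ rather than of $A_1$, which the paper instead defers to Remark \ref{remark used in generalized}.
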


\begin{proof}
$f$ satisfies the restriction condition (first version), so $\alpha$ restricted to $T$ is an involution on $T$, 
and $\gamma$ is conjugate by an automorphism or by an anti-automorphism of $T$ to (the restriction of) $\alpha$:

$\gamma= g^{-1} \alpha g$, where $g$ is an automorphism or an anti-automorphism of $T$ ($\alpha$ is the restriction of $\alpha$ to $T$).

Therefore, we get that $f$ is an $\alpha,\gamma$-endomorphism of $A_1$ 
($\gamma$ is conjugate to $\alpha$), because:
$(f \alpha)(X)= f(Y)= Q= \gamma(P)= \gamma(f(X))= (\gamma f)(X)$ and
$(f \alpha)(Y)= f(X)= P= \gamma(Q)= \gamma(f(Y))= (\gamma f)(Y)$.

Hence, from Lemma \ref{gamma,delta is onto} we get that $f$ is an automorphism of $A_1$.
\end{proof}

\begin{remark}\label{remark used in generalized}
In the proof of Theorem \ref{theorem of restriction condition first version}, $\gamma$ is conjugate by $g$ to the restriction of $\alpha$ to $T$, where $g$ is an automorphism or an anti-automorphism of $T$,
while in our definition and results about $\gamma,\delta$-endomorphisms we demanded that each of $\gamma$ and $\delta$ is conjugate by an automorphism or by an anti-automorphism of $A_1$, to $\alpha$.

However, one can see that if $f$ is an $\alpha,\delta$-endomorphism, 
where $\delta= h^{-1} \alpha h$, then it is enough to demand that $h$ is an automorphism or an anti-automorphism of the image of $f$, instead of demanding that $h$ is an automorphism or an anti-automorphism of $A_1$. 
\end{remark}

Now, instead of defining the involution $\gamma$ on $T$ ($\gamma(P)= Q$, $\gamma(Q)= P$), we could have defined on $T$, for example,
$\epsilon(P)= P$, $\epsilon(Q)= -Q$, extend in the obvious way to all of $T$, and get an involution on $T$.

Let $E$ be the involution on $A_1$ given by $E(X)= X$, $E(Y)= -Y$ ($E$ is just the involution $\beta$ we have already mentioned).
It is easy to see that $f E= \epsilon f$: 

$(f E)(X)= f(E(X))= f(X)= P= \epsilon(P)= \epsilon(f(X))= (\epsilon f)(X)$ and

$(f E)(Y)= f(E(Y))= f(-Y)= -Q= \epsilon(Q)= \epsilon(f(Y))= (\epsilon f)(Y)$.

Then taking $E$ and $\epsilon$ instead of $\alpha$ and $\gamma$, yield similar extension results, namely,
if the involution $\epsilon$ on $T$ can be extended to $A_1$,
where the extended involution is conjugate to $E$, 
then $f$ is an automorphism of $A_1$;
indeed, $f E= \epsilon f$. Let $\tilde{\epsilon}$ be the extension of $\epsilon$ to $A_1$, and let
$\tilde{\epsilon}= g^{-1} E g$, where $g$ is an automorphism or an anti-automorphism of $A_1$.
Clearly, $f E= \tilde{\epsilon} f$. 
We have already seen that $E$ is conjugate to $\alpha$ by $\varphi$: $E= \varphi^{-1} \alpha \varphi$
($\varphi(X):=\displaystyle\frac{X+Y}2$ and
$\varphi(Y):= Y-X$).
Therefore, $f E= \tilde{\epsilon} f$ becomes $f \varphi^{-1} \alpha \varphi= g^{-1} E g f$,
so $f \varphi^{-1} \alpha \varphi= g^{-1} (\varphi^{-1} \alpha \varphi) g f$.
Then, $(\varphi g f \varphi^{-1}) \alpha = \alpha (\varphi g f \varphi^{-1})$. 
\begin{itemize}
\item If $g$ is an automorphism of $A_1$, then $\varphi g f \varphi^{-1}$ is an $\alpha$-endomorphism of $A_1$, 
hence by \cite[Theorem 2.9]{moskowicz valqui} an automorphism of $A_1$. Then clearly $f$ is an automorphism of $A_1$.

\item If $g$ is an anti-automorphism of $A_1$, then $\varphi g f \varphi^{-1}$ is an $\alpha$-anti-endomorphism of $A_1$, hence by Proposition \ref{alpha anti-endomorphisms are onto} an anti-automorphism of $A_1$. Then clearly $f$ is an automorphism of $A_1$.

\end{itemize}

More generally, take any involution $E$ on $A_1$ that is conjugate to $\alpha$.
Write $E(X)= \sum \lambda_{ij}X^iY^j$ and $E(Y)= \sum \mu_{ij}X^iY^j$.
Then it is clear that $\epsilon(P):= \sum \lambda_{ij}P^iQ^j$ and $\epsilon(Q):= \sum \mu_{ij}P^iQ^j$ is an involution on $T$. 
It is easy to see that $f E= \epsilon f$: 

$(f E)(X)= f(\sum \lambda_{ij}X^iY^j)= \sum \lambda_{ij}f(X)^if(Y)^j=$ 

$\sum \lambda_{ij}P^iQ^j= \epsilon(P)= \epsilon(f(X))= (\epsilon f)(X)$ and

$(f E)(Y)= f(\sum \mu_{ij}X^iY^j)= \sum \mu_{ij}f(X)^if(Y)^j=$

$\sum \mu_{ij}P^iQ^j= \epsilon(Q)= \epsilon(f(Y))= (\epsilon f)(Y)$.

Again, if the involution $\epsilon$ on $T$ can be extended to $A_1$,
where the extended involution is conjugate to $E$, 
then $f$ is an automorphism of $A_1$ (just make the appropriate changes in the proof of the above example: 
$E(X)= X$, $E(Y)= -Y$, $\epsilon(P)= P$, $\epsilon(Q)= -Q$).

Therefore, we have: 

\begin{definition}[The generalized extension condition (first version)]
Let $f$ be an endomorphism of $A_1$.
We say that $f$ satisfies the generalized extension condition (first version), if there exists an involution $E$ on $A_1$ that is conjugate to $\alpha$
(write $E(X)= \sum \lambda_{ij}X^iY^j$ and $E(Y)= \sum \mu_{ij}X^iY^j$) such that its corresponding involution $\epsilon$ on $T$
($\epsilon(P):= \sum \lambda_{ij}P^iQ^j$ and $\epsilon(Q):= \sum \mu_{ij}P^iQ^j$)
can be extended to $A_1$, 
where the extended involution is conjugate to $E$. 
\end{definition}

\begin{theorem}\label{generalized theorem of extension condition first version}
Let $f$ be an endomorphism of $A_1$.
If $f$ satisfies the generalized extension condition (first version), then $f$ is an automorphism of $A_1$.
\end{theorem}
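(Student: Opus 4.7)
The plan is to repeat, for a general involution $E$, the argument the author carries out just before the statement for the specific case $E(X)=X$, $E(Y)=-Y$. First I would verify the commutation identity $f\circ E = \epsilon\circ f$ on the generators of $A_1$. Since $f$ is an algebra homomorphism and $P=f(X)$, $Q=f(Y)$,
\[
(f\circ E)(X) = f\left(\sum \lambda_{ij}X^iY^j\right) = \sum \lambda_{ij}P^iQ^j = \epsilon(P) = (\epsilon\circ f)(X),
\]
and the analogous calculation with $\mu_{ij}$ in place of $\lambda_{ij}$ gives $(f\circ E)(Y)=(\epsilon\circ f)(Y)$; multiplicativity then extends the identity to all of $A_1$.

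Next I would invoke the generalized extension condition to pick $\tilde\epsilon$, an extension of $\epsilon$ to an involution of $A_1$ that is conjugate to $E$. Since $\tilde\epsilon$ restricts to $\epsilon$ on $T$, and in particular coincides with $\epsilon$ on the generators $P$ and $Q$, the previous identity upgrades to $f\circ E = \tilde\epsilon\circ f$. In the language of Definition \ref{definition gamma,delta endo} this says exactly that $f$ is an $E,\tilde\epsilon$-endomorphism of $A_1$.

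Finally I would apply Lemma \ref{gamma,delta is onto}. By hypothesis $E$ is conjugate to $\alpha$, and $\tilde\epsilon$ is conjugate to $E$, so by transitivity $\tilde\epsilon$ is conjugate to $\alpha$ as well (the conjugating element being a composition of automorphisms and anti-automorphisms of $A_1$, hence itself an automorphism or an anti-automorphism). The hypotheses of Lemma \ref{gamma,delta is onto} are therefore met, and $f$ is an automorphism of $A_1$.

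The argument presents no real obstacle and is structurally identical to the special case already worked out in the paragraphs preceding the theorem; the only point worth flagging is that the formula defining $\epsilon$ on $T$ does yield a genuine involution of $T$, which is transparent from the algebra isomorphism $T\cong A_1$ sending $P\mapsto X$ and $Q\mapsto Y$, under which $\epsilon$ corresponds to $E$.
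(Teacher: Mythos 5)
Your proposal is correct and follows essentially the same route as the paper: verify $f\circ E=\epsilon\circ f$ on generators, replace $\epsilon$ by its extension $\tilde\epsilon$, and conclude via the conjugacy of both $E$ and $\tilde\epsilon$ to $\alpha$. The paper inlines the conjugation computation (reducing to an $\alpha$-endomorphism or $\alpha$-anti-endomorphism and citing the starred Dixmier theorem), which is exactly the content of Lemma \ref{gamma,delta is onto} that you invoke directly, so the two arguments are the same up to packaging.
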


Similarly, instead of demanding that $\alpha$ restricted to $T$ is an involution on $T$ and $\gamma$ is conjugate by an automorphism or by an anti-automorphism of $T$ to the restriction of $\alpha$ to $T$, demand that $E$ restricted to $T$ is an involution on $T$ and $\epsilon$ is conjugate by an automorphism or by an anti-automorphism of $T$ to the restriction of $E$ to $T$.

Then taking $E$ and $\epsilon$ instead of $\alpha$ and $\gamma$, yield similar restriction results:

\begin{definition}[The generalized restriction condition (first version)]
Let $f$ be an endomorphism of $A_1$.
We say that $f$ satisfies the generalized restriction condition (first version), if there exists an involution $E$ on $A_1$ that is conjugate to $\alpha$,
such that $E$ restricted to $T$ is an involution on $T$. 
\end{definition}

\begin{theorem}\label{generalized theorem of restriction condition first version}
Let $f$ be an endomorphism of $A_1$.
If $f$ satisfies the generalized restriction condition (first version), then $f$ is an automorphism of $A_1$.
\end{theorem}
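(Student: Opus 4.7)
The plan is to mirror the proof of Theorem \ref{theorem of restriction condition first version} with the pair $(\alpha,\gamma)$ replaced by $(E,\epsilon)$ throughout, using as a bridge the identity $f\circ E=\epsilon\circ f$ on $A_1$ which was already verified (by checking on $X$ and $Y$ using the coefficients $\lambda_{ij},\mu_{ij}$) in the discussion just preceding the definition of the generalized extension condition. In effect, the content of the restriction hypothesis is transported from the fixed involution $\alpha$ to an arbitrary involution $E$ conjugate to $\alpha$, with $\gamma$ replaced by the transported involution $\epsilon$ on $T$.

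First, I would unpack the generalized restriction condition to obtain an involution $E$ on $A_1$ with $E=\phi^{-1}\alpha\phi$ for some automorphism or anti-automorphism $\phi$ of $A_1$, together with $E(T)\subseteq T$, so that $E|_T$ is indeed an involution on $T$. Since $\epsilon$ is also an involution on $T\cong A_1$, the single-conjugacy-class hypothesis flagged in the abstract supplies an automorphism or anti-automorphism $g$ of $T$ with $\epsilon=g^{-1}(E|_T)\,g$. Substituting into $fE=\epsilon f$ yields $(gf)E=(E|_T)(gf)$, which, read back through $E=\phi^{-1}\alpha\phi$, is the generalized counterpart of the identity $(hfg^{-1})\alpha=\alpha(hfg^{-1})$ used inside the proof of Lemma \ref{gamma,delta is onto}. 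Finally, I would invoke Lemma \ref{gamma,delta is onto} with $\gamma:=E$ and $\delta:=\epsilon$, loosened as in Remark \ref{remark used in generalized} so that the conjugacy realising $\delta=h^{-1}\alpha h$ is permitted to live inside $T$ rather than in all of $A_1$; the outcome is that $f$ is an automorphism of $A_1$.

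The main obstacle is precisely the step in which $\epsilon$ and $E|_T$ are declared conjugate on $T$: this is exactly the conditional hypothesis advertised in the paper's abstract, that all involutions in $A_1$ (equivalently, in $T$) lie in a single conjugacy class of the group generated by automorphisms and anti-automorphisms. Granting this, the proof reduces essentially to the bookkeeping already carried out for Theorem \ref{theorem of restriction condition first version}; what is genuinely new is only the extra layer of conjugating $\epsilon$ back to $\alpha$ by passing through $E|_T$, and this extra layer is absorbed by Remark \ref{remark used in generalized} together with the conjugacy $E=\phi^{-1}\alpha\phi$.
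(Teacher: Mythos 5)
Your proposal is correct and follows essentially the same route as the paper, which simply directs the reader to adapt the proof of Theorem \ref{generalized theorem of restriction condition second version}: verify $f\circ E=\epsilon\circ f$ on $X$ and $Y$, conjugate $\epsilon$ to $E|_T$ inside $T$ and $E$ to $\alpha$ inside $A_1$, and conclude that the resulting composite is an $\alpha$-endomorphism or $\alpha$-anti-endomorphism, hence onto. Your reliance on the single-conjugacy-class hypothesis to produce $\epsilon=g^{-1}(E|_T)g$ matches the paper's own (conditional) argument, and is in fact needed here because the formal Definition of the generalized restriction condition (first version) omits the conjugacy clause for $\epsilon$ that the surrounding prose announces.
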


For a detailed proof, see the proof of Theorem \ref{generalized theorem of restriction condition second version} and adjust it to the first version.
\section{Equivalence of the Dixmier conjecture and the $\gamma,\delta$ conjecture for $A_1$ (second version)}

The reader has probably wondered why, in the previous section, ``first version" was added to most of our results.
The reason is that we wish to bring those results without the conjugacy conditions, where the conjugacy conditions are: ``each of $\gamma$ and $\delta$ is conjugate to $\alpha$", ``where the extended involution is conjugate to $\alpha$" and ``$\gamma$ is conjugate by an automorphism or by an anti-automorphism of $T$ to (the restriction of) $\alpha$".

The ``second version" results are just the first version results without the conjugacy conditions.

We now explain how to remove the conjugacy conditions, without loosing our previous, first version, results:  

An interesting problem is finding all the involutions on $A_1$; Of course, conjugates of $\alpha$ by an automorphism or by an anti-automorphism are involutions. Apriori, it may happen that there exist involutions other than conjugates of $\alpha$. However, in a private note \cite{bell}, J. Bell has sketched an idea of proof that there are no involutions on $A_1$ other than the conjugates of $\alpha$. His idea includes two steps; the first step has not yet been checked, though it seems to be true. The second step relies on the first step.

\begin{itemize}
\item First step: Recall that the group of automorphisms of $A_1$ is the amalgamated free product of two of its subgroups (denote these two subgroups by $A$ and $B$, where $A$ is the linear automorphisms and $B$ is the triangular automorphisms) over their intersection. 
 
Similarly, it seems that the group $G$ generated by automorphisms and anti-automorphisms of $A_1$ is the amalgamated free product of two of its subgroups (denote these two subgroups by $\tilde{A}$ and $\tilde{B}$, each is defined similarly to the above $A$ and $B$) over their intersection.
(Notice that the group of automorphisms of $A_1$ is a noraml subgroup of $G$, having index $2$).

\item Second step: Applying Bass-Serre theorem \cite[Corollary 1, page 6]{trees} to a given involution $\gamma$ on $A_1$ shows almost immediately that it is conjugate to $\alpha$; indeed, $\gamma$ is of order $2$, so it is conjugate to an order $2$ element $a \in \tilde{A}$ or it is conjugate to an order $2$ element $b \in \tilde{B}$. Since there are no elements of order $2$ in $\tilde{B}$, $\gamma$ is conjugate to an order $2$ element $a \in \tilde{A}$. Finally, any order $2$ element $a \in \tilde{A}$ is conjugate to $\alpha$. 
\end{itemize}
If indeed there are no involutions on $A_1$ other than the conjugates of $\alpha$, then in our first version results we can omit the conjugacy conditions and get the following results:

\begin{conjecture}[The $\gamma,\delta$ conjecture (second version)]
For every endomorphism $f$ of $A_1$, there exist involutions $\gamma$ and $\delta$, such that $f$ is a $\gamma,\delta$-endomorphism of $A_1$.
\end{conjecture}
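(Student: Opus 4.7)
The plan is to proceed by reformulating the statement as an equivalent question about the image $T := f(A_1) \subseteq A_1$ and then to attack that question by exploring the moduli of involutions on $A_1$. Write $P := f(X)$ and $Q := f(Y)$, so $T = K\langle P, Q \rangle$ with $[Q, P] = 1$. Since $A_1$ is simple, $f$ is injective, so $f : A_1 \to T$ is an algebra isomorphism. I would first show that the conjecture is equivalent to the existence of an involution $\delta$ on $A_1$ that stabilizes $T$ setwise: given such $\delta$, set $\gamma := f^{-1} \circ \delta \circ f$; the inclusion $\delta(T) \subseteq T$ makes this well-defined on all of $A_1$, it is anti-multiplicative because $f^{-1} : T \to A_1$ is an algebra isomorphism, it satisfies $\gamma^2 = \ide$, and by construction $f \circ \gamma = \delta \circ f$. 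Conversely, any pair $\gamma, \delta$ with $f \circ \gamma = \delta \circ f$ forces $\delta(T) \subseteq T$, so the two statements are genuinely equivalent.

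The second step is to construct such a $\delta$ for an arbitrary endomorphism $f$. The simplest candidate is $\delta := \alpha$; this succeeds exactly when $f$ satisfies the generalized restriction condition of the first version with $E = \alpha$. When that fails, I would test a family of involutions obtained by conjugating $\alpha$ by standard triangular automorphisms such as $X \mapsto X$, $Y \mapsto Y + \lambda X^n$, checking the stabilization condition for each. More systematically, I would parametrize every involution $E$ on $A_1$ by its generator values $E(X) = \sum \lambda_{ij} X^i Y^j$, $E(Y) = \sum \mu_{ij} X^i Y^j$, with the constraints $[E(Y), E(X)] = 1$ and $E^2 = \ide$ cutting out an explicit (infinite-dimensional) variety $\mathcal{I}$ of candidates; the preservation conditions $E(P), E(Q) \in T$ cut out a subvariety $\mathcal{I}_T \subseteq \mathcal{I}$, and the conjecture reduces to proving $\mathcal{I}_T$ is non-empty for every $f$.

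For the non-emptiness I would attempt a degree-filtration argument using the total-degree filtration $F_d A_1$ in $X, Y$: filter $T$ accordingly and construct $\delta$ level by level, at each stage extending a partial involution defined modulo $F_d$ to one defined modulo $F_{d+1}$. Since the associated graded ring is the commutative polynomial ring $K[x, y]$, one can transport the problem to a Poisson-geometric question on the affine plane, produce a symbol-level involution stabilizing the leading symbols of $P$ and $Q$ there, and then lift back to $A_1$ order by order using the commutator relation $[Q, P] = 1$.

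The main obstacle is precisely this lifting: at each inductive stage the obstruction to extending the partial involution is a commutator-defect that need not vanish, and matching the inductive choices to the particular polynomials $P, Q$ is delicate. I expect the argument to split by cases according to the leading bi-degrees of $P$ and $Q$, with the hard cases being those in which the leading symbols are not aligned with any of the known standard involutions; there the method must produce a genuinely new involution of $A_1$ tailored to the pair $(P, Q)$, and that is the step I anticipate will resist easy resolution.
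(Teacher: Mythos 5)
The statement you are addressing is stated in the paper as a \emph{conjecture}, and the paper itself proves (in the theorem on the equivalence of the Dixmier conjecture and the $\gamma,\delta$ conjecture, second version) that it is equivalent to the Dixmier conjecture; so there is no proof in the paper to compare against, and a complete argument here would settle a long-standing open problem. Your first step is correct and is worth isolating: the existence of involutions $\gamma,\delta$ with $f\circ\gamma=\delta\circ f$ is indeed equivalent to the existence of a single involution $\delta$ of $A_1$ with $\delta(T)\subseteq T$, since injectivity of $f$ (from simplicity of $A_1$) lets you set $\gamma:=f^{-1}\circ\delta\circ f$, and conversely $f\circ\gamma=\delta\circ f$ forces $\delta(T)=T$. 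This is precisely the paper's generalized restriction condition (second version), Definition \ref{definition generalized restriction second}, and your direct transport of $\delta$ through $f$ is cleaner than the paper's route in Theorem \ref{generalized theorem of restriction condition second version}, which additionally invokes the unverified assumption (Bell's first step) that all involutions of $T$ lie in one conjugacy class.

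The gap is the entirety of your second step. Everything after the reformulation --- testing $\delta=\alpha$, conjugating $\alpha$ by triangular automorphisms, parametrizing a ``variety'' $\mathcal{I}$ of involutions and hoping $\mathcal{I}_T\neq\emptyset$, and the order-by-order lifting along the total-degree filtration --- is a list of strategies, not an argument: you correctly note that the obstruction at each lifting stage need not vanish, but you supply no mechanism for killing it and no control over the leading symbols of $P$ and $Q$ (which, for a putative counterexample to the Dixmier conjecture, are known to be highly constrained; cf.\ \cite{GGV.old}). There is also a structural circularity to be aware of: if the paper's chain of equivalences holds, then an involution $\delta$ stabilizing $T$ exists \emph{only when} $T=A_1$, in which case $\delta=\alpha$ works trivially. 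So the nontrivial content of the conjecture is exactly to derive a contradiction from $T\subsetneq A_1$; your reformulation relocates that difficulty but does not reduce it, and any ``construction'' of $\delta$ must implicitly first prove $T=A_1$. As written, the proposal establishes a correct equivalence and then stops where the paper stops.
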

\begin{lemma}
Let $f$ be an endomorphism of $A_1$.
Then: $f$ is a $\gamma,\delta$-endomorphism of $A_1$ $\Longleftrightarrow$ $f$ is an automorphism of $A_1$.
\end{lemma}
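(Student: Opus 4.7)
The plan is to derive this lemma as a direct corollary of the first version, Lemma \ref{gamma,delta is onto}, using the result (whose proof sketch by Bell is discussed above) that every involution on $A_1$ is conjugate to $\alpha$ by an element of the group $G$ generated by automorphisms and anti-automorphisms of $A_1$. In that sense this lemma carries no new analytic content; all of the work has been pushed into the classification of involutions.

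For the easy direction ($\Longleftarrow$), if $f$ is an automorphism then, exactly as in Remark \ref{remarks after definition}(2), I would set $\gamma := f^{-1}\alpha f$ and $\delta := \alpha$. Both are involutions on $A_1$, and $f\circ\gamma = f\circ f^{-1}\alpha f = \alpha\circ f = \delta\circ f$, so $f$ is tautologically a $\gamma,\delta$-endomorphism.

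For the nontrivial direction ($\Longrightarrow$), suppose $f$ is a $\gamma,\delta$-endomorphism for some involutions $\gamma$ and $\delta$ on $A_1$. By the standing assumption of the ``second version'', every involution on $A_1$ is conjugate to $\alpha$ by an element of $G$; in particular each of $\gamma$ and $\delta$ is conjugate to $\alpha$. But this is precisely the hypothesis needed to invoke the first-version Lemma \ref{gamma,delta is onto}, whose conclusion is that $f$ is an automorphism of $A_1$. So the proof is just: verify that the conjugacy hypotheses hold automatically, then quote the first version.

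The main obstacle is not in the present lemma at all, but in the input it relies on, namely Bell's first step: proving that $G$ admits an amalgamated free product decomposition $G = \tilde{A} *_{\tilde{A}\cap \tilde{B}} \tilde{B}$ in analogy with the classical Jung--Van der Kulk decomposition of $\Aut(A_1) = A *_{A\cap B} B$. Once this structural result is available, Bass--Serre theory forces any torsion element of $G$ to be conjugate into $\tilde{A}$ or $\tilde{B}$; combined with the absence of order-$2$ elements in $\tilde{B}$ and the classification of order-$2$ elements of $\tilde{A}$ as conjugates of $\alpha$, this yields the required universal conjugacy of involutions. Thus the entire non-trivial burden of the second-version lemma sits on that amalgamation step, after which the lemma itself is a one-line reduction.
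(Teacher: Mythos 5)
Your proof is correct and follows exactly the route the paper intends: the second-version lemma is stated there without a separate proof precisely because, under the standing assumption that every involution on $A_1$ is conjugate to $\alpha$ (Bell's two steps), it reduces verbatim to the first-version Lemma \ref{gamma,delta is onto}, with the easy direction given by Remarks \ref{remarks after definition}(2). You also correctly flag, as the paper does, that the only unverified ingredient is the amalgamated free product decomposition of the group generated by automorphisms and anti-automorphisms.
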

The following theorem shows that the Dixmier conjecture is true in the category of rings with involution:
\begin{theorem}[The involutive Dixmier conjecture is true (second version)]
Let $f$ be an endomorphism from the ring with involution $(A_1,\gamma)$ to the ring with involution $(A_1,\delta)$. Then $f$ is an automorphism.
\end{theorem}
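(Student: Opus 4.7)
The plan is to reduce this second-version statement to the first-version Lemma~\ref{gamma,delta is onto} by eliminating the conjugacy hypotheses ``$\gamma$ and $\delta$ are conjugate to $\alpha$". The whole point of the second version, as announced before the statement, is that these hypotheses become vacuous once one knows that every involution on $A_1$ is conjugate to $\alpha$ by an element of the group $G$ generated by automorphisms and anti-automorphisms. So my proof would be essentially a two-line reduction, whose content is deferred to Bell's classification of involutions.

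In more detail, I would proceed as follows. First, invoke the (assumed) structure theorem that the group $G$ is the amalgamated free product $\tilde{A}*_{\tilde{A}\cap\tilde{B}}\tilde{B}$, the analogue for $G$ of the classical Dixmier--van~den~Essen--Makar-Limanov description of $\Aut(A_1)$. Applying the Bass--Serre theorem~\cite[Corollary~1, p.~6]{trees} to an arbitrary involution $\iota$ on $A_1$ yields that $\iota$ is conjugate in $G$ to an order-two element of $\tilde{A}$ or of $\tilde{B}$; since $\tilde{B}$ contains no element of order two, $\iota$ is conjugate to an order-two element of $\tilde{A}$, and a direct computation inside the (essentially affine) group $\tilde{A}$ shows that all such elements are $G$-conjugate to the exchange involution $\alpha$. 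Apply this to both $\gamma$ and $\delta$ to write $\gamma = g^{-1}\alpha g$ and $\delta = h^{-1}\alpha h$ with $g,h\in G$, so that each of $g,h$ is either an automorphism or an anti-automorphism of~$A_1$.

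At this stage the hypotheses of the first-version Lemma~\ref{gamma,delta is onto} are satisfied verbatim, so that lemma (whose $\Longrightarrow$ direction already encompasses all four sub-cases depending on whether $g$ and $h$ are automorphisms or anti-automorphisms, using \cite[Theorem~2.9]{moskowicz valqui} and Proposition~\ref{alpha anti-endomorphisms are onto}) immediately gives that $f$ is an automorphism of $A_1$. No new calculation with $f$, $P=f(X)$ or $Q=f(Y)$ is needed beyond what already appears in the proof of Lemma~\ref{gamma,delta is onto}.

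The main obstacle is entirely in the first ingredient, and is exactly the caveat flagged in the abstract: proving that $G$ admits the stated amalgamated product decomposition. The classical proof for $\Aut(A_1)$ must be adapted to accommodate the anti-automorphisms, and this is the ``first step" of Bell's sketch that has not yet been verified. Everything else, namely the Bass--Serre reduction, the absence of order-two elements in $\tilde{B}$, and the classification of order-two elements of $\tilde{A}$ up to $G$-conjugacy, is routine once the amalgamated product structure is in hand, and contains no additional Dixmier-type difficulty.
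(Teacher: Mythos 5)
Your proposal matches the paper's own route exactly: the paper derives the second-version theorem from Lemma~\ref{gamma,delta is onto} by invoking Bell's two-step argument (the amalgamated free product structure of the group generated by automorphisms and anti-automorphisms, followed by the Bass--Serre theorem) to conclude that every involution on $A_1$ is conjugate to $\alpha$, so that the conjugacy hypotheses of the first version become vacuous. You also correctly identify, as the paper does, that the only unverified ingredient is the first step of Bell's sketch.
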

\begin{theorem}[Equivalence of the Dixmier conjecture and the $\gamma,\delta$ conjecture (second version)]
The Dixmier conjecture is true $\Longleftrightarrow$ The $\gamma,\delta$ conjecture (second version) is true.
\end{theorem}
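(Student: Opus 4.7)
The plan is to mimic the proof of Theorem \ref{dixmier iff gamma,delta first version} verbatim, but with the conjugacy hypotheses on $\gamma$ and $\delta$ now supplied ``for free'' by the claim (contingent on Bell's first step) that every involution on $A_1$ lies in a single conjugacy class of $G$, namely the conjugacy class of $\alpha$. Once that claim is granted, the second version $\gamma,\delta$ conjecture is literally the first version $\gamma,\delta$ conjecture with a vacuous extra hypothesis, and the equivalence follows immediately from Theorem \ref{dixmier iff gamma,delta first version}. So the real substance is packaged into the second-version lemma and into checking that both implications reduce cleanly.

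For the forward direction, I would take an arbitrary endomorphism $f$ of $A_1$, invoke the Dixmier conjecture to conclude that $f$ is an automorphism, and then set $\gamma := f^{-1}\circ \alpha \circ f$ and $\delta := \alpha$. These are involutions on $A_1$ (no conjugacy condition is even asserted in the second version), and the identity $f\circ \gamma = \alpha \circ f = \delta \circ f$ is immediate. Thus $f$ is a $\gamma,\delta$-endomorphism, witnessing the second-version conjecture.

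For the reverse direction, let $f$ be any endomorphism of $A_1$ and assume the second-version $\gamma,\delta$ conjecture holds. Choose involutions $\gamma,\delta$ with $f\circ \gamma = \delta\circ f$. Now apply the pending classification of involutions on $A_1$ to write $\gamma = g^{-1}\alpha g$ and $\delta = h^{-1}\alpha h$ for some $g,h \in G$, i.e.\ each of $g,h$ is an automorphism or an anti-automorphism. This is exactly the hypothesis of Lemma \ref{gamma,delta is onto} (first version), and its conclusion yields that $f$ is an automorphism, as required.

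The only real obstacle is the hypothesis underlying the second version, namely the classification of involutions on $A_1$ via Bell's unchecked first step about the amalgamated free product structure of $G = \langle \operatorname{Aut}(A_1), \text{anti-automorphisms}\rangle$. If that amalgam decomposition holds, then by Bass--Serre (as sketched before Theorem \ref{dixmier iff g,h first version}) every involution in $G$ is conjugate into the vertex group $\tilde A$ and then shown to be conjugate to $\alpha$, so the hypothesis-free second version collapses to the first version. In the proof itself nothing new needs to be invented: one simply cites the second-version lemma in both directions, exactly as the first-version theorem cites Lemma \ref{gamma,delta is onto}.
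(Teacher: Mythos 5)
Your proposal is correct and follows exactly the route the paper intends: the paper states the second-version theorem without a written proof, relying on the preceding discussion that, granted Bell's classification of involutions on $A_1$ as conjugates of $\alpha$, the conjugacy conditions become vacuous and the second version reduces to the first version (Theorem \ref{dixmier iff gamma,delta first version} together with Lemma \ref{gamma,delta is onto}). Your explicit handling of both directions, and your flagging of the dependence on Bell's unchecked first step, match the paper's argument precisely.
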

                    
\begin{definition}[The extension condition (second version)]
Let $f$ be an endomorphism of $A_1$. 
We say that $f$ satisfies the extension condition (second version) if the involution $\gamma$ on $T$ ($\gamma(P)= Q$, $\gamma(Q)= P$) can be extended to $A_1$.
\end{definition}

\begin{theorem}\label{theorem of extension condition second version}
Let $f$ be an endomorphism of $A_1$.
If $f$ satisfies the extension condition (second version), then $f$ is an automorphism of $A_1$.
\end{theorem}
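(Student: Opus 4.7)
The plan is to reduce this to the first-version result (Theorem \ref{theorem of extension condition first version}) by invoking the Bell-type statement, which is the hypothesis underlying every ``second version'' result in this section: every involution on $A_1$ is conjugate to $\alpha$. Under this assumption, the only difference between the extension condition (second version) and the extension condition (first version) disappears, since any involution extending $\gamma$ is automatically conjugate to $\alpha$.

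More concretely, I would argue as follows. By the extension condition (second version) there exists an involution $\Gamma$ on $A_1$ whose restriction to $T$ equals $\gamma$. First I would verify on the generators $X,Y$ that $f\circ\alpha=\Gamma\circ f$: one has $(f\alpha)(X)=f(Y)=Q=\gamma(P)=\Gamma(P)=\Gamma(f(X))$, and likewise $(f\alpha)(Y)=f(X)=P=\gamma(Q)=\Gamma(Q)=\Gamma(f(Y))$, so the equality extends to all of $A_1$. Thus $f$ is an $\alpha,\Gamma$-endomorphism of $A_1$. Next, by the Bell-type result assumed at the start of this section, $\Gamma$ is conjugate to $\alpha$ (and $\alpha$ is of course conjugate to itself), so the pair $(\alpha,\Gamma)$ satisfies the conjugacy hypotheses of Lemma \ref{gamma,delta is onto}. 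Applying that lemma (equivalently, applying Theorem \ref{theorem of extension condition first version} directly with the extended involution $\Gamma$ in place of its first-version counterpart) yields that $f$ is an automorphism of $A_1$.

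The whole argument is essentially a rewriting of the proof of Theorem \ref{theorem of extension condition first version}; the only new ingredient is the removal of the explicit conjugacy hypothesis on $\Gamma$, which is absorbed into the blanket assumption of this section. Consequently, the main obstacle is not in the proof of this theorem at all, but in the first step of Bell's sketch: verifying that the group $G$ generated by automorphisms and anti-automorphisms of $A_1$ is the amalgamated free product of $\tilde A$ and $\tilde B$ over their intersection, so that Bass--Serre theory can be applied to conclude that every involution is conjugate to $\alpha$. Without that input one is stuck with the (possibly strict) subclass of involutions conjugate to $\alpha$, and the gain over the first-version theorem vanishes.
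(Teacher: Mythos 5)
Your proposal is correct and is essentially the paper's own (implicit) argument: the paper presents the second-version results as the first-version results with the conjugacy condition removed via Bell's claim that every involution on $A_1$ is conjugate to $\alpha$, which is exactly your reduction to Theorem \ref{theorem of extension condition first version}. You also correctly identify that the real gap lies in the unverified first step of Bell's sketch, not in this theorem's proof.
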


\begin{definition}[The generalized extension condition (second version)]
Let $f$ be an endomorphism of $A_1$.
We say that $f$ satisfies the generalized extension condition (second version), if there exists an involution $E$ on $A_1$
(write $E(X)= \sum \lambda_{ij}X^iY^j$ and $E(Y)= \sum \mu_{ij}X^iY^j$) such that its corresponding involution $\epsilon$ on $T$
($\epsilon(P):= \sum \lambda_{ij}P^iQ^j$ and $\epsilon(Q):= \sum \mu_{ij}P^iQ^j$)
can be extended to $A_1$.
\end{definition}


\begin{theorem}\label{generalized theorem of extension condition first version}
Let $f$ be an endomorphism of $A_1$.
If $f$ satisfies the generalized extension condition (second version), then $f$ is an automorphism of $A_1$.
\end{theorem}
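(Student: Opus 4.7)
The plan is to reduce the statement to the first-version result Theorem \ref{generalized theorem of extension condition first version}. Suppose $f$ satisfies the generalized extension condition (second version): there is an involution $E$ on $A_1$, with $E(X)=\sum \lambda_{ij}X^iY^j$ and $E(Y)=\sum \mu_{ij}X^iY^j$, whose induced involution $\epsilon$ on $T$ (defined by the same coefficient data, but evaluated on $P,Q$) extends to an involution $\tilde{\epsilon}$ on $A_1$.

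First I would verify, exactly as in the computation carried out earlier in the paper preceding the first-version generalized extension definition, that $f\circ E = \tilde{\epsilon}\circ f$ as endomorphisms of $A_1$. Since $f$ is an algebra homomorphism, on $X$ we have $(f\circ E)(X) = f\!\left(\sum \lambda_{ij}X^iY^j\right) = \sum \lambda_{ij}P^iQ^j = \epsilon(P)$, and since $\tilde{\epsilon}$ restricts to $\epsilon$ on $T$ this equals $\tilde{\epsilon}(P) = \tilde{\epsilon}(f(X)) = (\tilde{\epsilon}\circ f)(X)$; the identical computation with the $\mu_{ij}$ handles $Y$. Consequently $f$ is an $E,\tilde{\epsilon}$-endomorphism of $A_1$ in the sense of Definition \ref{definition gamma,delta endo}.

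Next I would invoke the standing assumption of this section, namely Bell's classification that every involution on $A_1$ is conjugate to $\alpha$. Applied to both $E$ and $\tilde{\epsilon}$, this immediately upgrades the $E,\tilde{\epsilon}$-endomorphism structure to one where each of the two involutions is conjugate to $\alpha$, so the hypothesis of the first-version Lemma \ref{gamma,delta is onto} is automatically met; that lemma then yields that $f$ is an automorphism of $A_1$. Equivalently, the first-version Theorem \ref{generalized theorem of extension condition first version} applies directly once conjugacy is granted.

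The main obstacle, and the reason this is phrased as a \emph{second version} rather than as an unconditional theorem, is the unverified first step in Bell's sketch: the claim that the group generated by all automorphisms and anti-automorphisms of $A_1$ is the amalgamated free product $\tilde{A} *_{\tilde{A}\cap \tilde{B}} \tilde{B}$. Once that structural result is established, the Bass-Serre argument outlined in the paper forces any involution to be conjugate to an order-two element of $\tilde{A}$ (as $\tilde{B}$ contains no such element) and finally to $\alpha$, whereupon the reduction above goes through without further work.
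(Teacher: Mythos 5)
Your proof is correct and follows essentially the route the paper intends: establish $f\circ E=\tilde{\epsilon}\circ f$ on the generators, use Bell's classification (the standing assumption of the second-version section) to conclude that both $E$ and $\tilde{\epsilon}$ are conjugate to $\alpha$, and then invoke Lemma \ref{gamma,delta is onto}, whose proof is exactly the conjugation manipulation the paper carries out explicitly in its first-version discussion. The paper leaves this particular proof implicit (stating only that the second-version results follow from the first-version ones once the conjugacy conditions are granted), so your write-up simply makes that reduction explicit, including the correct caveat that everything is conditional on the unverified amalgamated-product structure of the group generated by automorphisms and anti-automorphisms.
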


We move to the restriction condition results; in contrast to the extension condition results,
here we have found a condition on $f$ which implies that $f$ satisfies the restriction condition (hence $f$ is an automorphism), namely
that one of $f(X)$,$f(Y)$ is symmetric or skew-symmetric with respect to any involution on $A_1$,
see Theorem \ref{important corollary}.
However, we bring a second proof for Theorem \ref{important corollary}, which is independent of the generalized restriction condition. 

\begin{definition}[The restriction condition (second version)]
Let $f$ be an endomorphism of $A_1$. 
We say that $f$ satisfies the restriction condition (second version) if the exchange involution $\alpha$ on $A_1$, when restricted to $T$, is an involution on $T$. 
\end{definition}

\begin{theorem}\label{theorem of restriction condition second version}
Let $f$ be an endomorphism of $A_1$.
If $f$ satisfies the restriction condition (second version), then $f$ is an automorphism of $A_1$.
\end{theorem}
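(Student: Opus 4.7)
The plan is to imitate the proof of the first-version counterpart Theorem~\ref{theorem of restriction condition first version}, using Bell's sketched result (the running assumption of the second-version section) to supply the conjugacy ingredient that is no longer in the hypothesis. Write $P:=f(X)$, $Q:=f(Y)$ and $T:=f(A_1)$. Since $T=K\langle P,Q\mid QP-PQ=1\rangle$ is isomorphic to $A_1$, the recipe $\gamma(P)=Q$, $\gamma(Q)=P$ extends to an involution $\gamma$ on $T$, and the restriction condition provides a second involution $\alpha|_T$ on $T$. The key step is to invoke Bell's result on $T\cong A_1$: any two involutions on $T$ are conjugate in the group generated by automorphisms and anti-automorphisms of $T$, so one obtains a $g$ (an automorphism or anti-automorphism of $T$) with $\gamma=g^{-1}\,(\alpha|_T)\,g$.

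Next I would check the intertwining relation $f\circ\alpha=\gamma\circ f$ by evaluating on generators exactly as in the proof of Theorem~\ref{theorem of restriction condition first version}, and then substitute $\gamma=g^{-1}\alpha g$ to obtain $(gf)\circ\alpha=\alpha\circ(gf)$. The composition $gf\colon A_1\to T\subseteq A_1$ is a ring homomorphism or a ring anti-homomorphism of $A_1$ (according to whether $g$ is an automorphism or an anti-automorphism of $T$) that commutes with $\alpha$. Remark~\ref{remark used in generalized} is what legitimises this step, since it explicitly allows $g$ to be an automorphism or anti-automorphism of the image $T$ rather than of $A_1$ itself.

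Finally, in the endomorphism case \cite[Theorem~2.9]{moskowicz valqui} promotes $gf$ to an $\alpha$-automorphism of $A_1$, and in the anti-endomorphism case Proposition~\ref{alpha anti-endomorphisms are onto} promotes it to an $\alpha$-anti-automorphism of $A_1$. Either way $gf$ is surjective; because $gf$ factors through $T$, this forces $T=A_1$, so $f$ itself is surjective, and by simplicity of $A_1$ also injective. Hence $f\in\Aut(A_1)$.

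The main obstacle is the appeal to Bell's sketched conjugacy theorem for involutions on $A_1$, which the paper itself flags as not yet fully verified; everything else is a formal translation of the first-version argument, and Remark~\ref{remark used in generalized} already ensures that a conjugator $g$ living inside the group generated by automorphisms and anti-automorphisms of $T$ is enough for the reduction.
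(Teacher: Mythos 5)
Your proposal is correct and follows essentially the same route as the paper: restrict $\alpha$ to $T$, invoke Bell's conjugacy assumption on $T\cong A_1$ to get $\gamma=g^{-1}(\alpha|_T)g$, verify $f\circ\alpha=\gamma\circ f$ on generators, and conclude via \cite[Theorem 2.9]{moskowicz valqui} or Proposition \ref{alpha anti-endomorphisms are onto}. The only cosmetic difference is that the paper compresses the last step into an appeal to Lemma \ref{gamma,delta is onto} (with the caveat of Remark \ref{remark used in generalized}), whereas you unwind that argument explicitly to handle $g$ being an automorphism or anti-automorphism of $T$ rather than of $A_1$ --- which is exactly the justification the paper itself points to.
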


\begin{proof}
$f$ satisfies the restriction condition (second version), so $\alpha$ restricted to $T$ is an involution on $T$.
Since $T$ is isomorphic to $A_1$ and we ``assume" that Bell's first step is indeed true, it follows that every involution on $T$ is conjugate to one chosen involution on $T$.

Let the restriction of $\alpha$ to $T$ be the ``chosen" involution on $T$.
Since $\gamma$ is an involution on $T$, we get that $\gamma$ is conjugate to the restriction of $\alpha$ to $T$:
$\gamma= g^{-1} \alpha g$, where $g$ is an automorphism or an anti-automorphism of $T$ ($\alpha$ is the restriction of $\alpha$ to $T$).

Therefore, we get that $f$ is an $\alpha,\gamma$-endomorphism of $A_1$ 
($\gamma$ is conjugate to $\alpha$), because:
$(f \alpha)(X)= f(Y)= Q= \gamma(P)= \gamma(f(X))= (\gamma f)(X)$ and
$(f \alpha)(Y)= f(X)= P= \gamma(Q)= \gamma(f(Y))= (\gamma f)(Y)$.

Hence, from Lemma \ref{gamma,delta is onto} we get that $f$ is an automorphism of $A_1$.
\end{proof}

\begin{definition}[The generalized restriction condition (second version)]\label{definition generalized restriction second}
Let $f$ be an endomorphism of $A_1$.
We say that $f$ satisfies the generalized restriction condition (second version), if there exists an involution $E$ on $A_1$,
such that $E$ restricted to $T$ is an involution on $T$. 
\end{definition}

\begin{theorem}\label{generalized theorem of restriction condition second version}
Let $f$ be an endomorphism of $A_1$. 
If $f$ satisfies the generalized restriction condition (second version), then $f$ is an automorphism of $A_1$.
\end{theorem}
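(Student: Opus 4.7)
The plan is to follow the pattern of Theorem \ref{theorem of restriction condition second version}, using the involution $E$ in place of $\alpha$ and its transported version $\epsilon$ on $T$ in place of $\gamma$. Let $E$ be the involution on $A_1$ furnished by the generalized restriction condition, and write $E(X)=\sum\lambda_{ij}X^iY^j$, $E(Y)=\sum\mu_{ij}X^iY^j$. Define the corresponding involution $\epsilon$ on $T$ by $\epsilon(P):=\sum\lambda_{ij}P^iQ^j$ and $\epsilon(Q):=\sum\mu_{ij}P^iQ^j$; this is an involution on $T$ because the isomorphism $A_1\to T$, $X\mapsto P$, $Y\mapsto Q$, transports the defining relation of an involution. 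As recorded in the discussion preceding Theorem \ref{generalized theorem of extension condition first version}, this setup yields $f\circ E=\epsilon\circ f$.

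Next, the hypothesis provides a second involution on $T$, namely the restriction $E|_T$. Invoking Bell's first step applied to $T\cong A_1$, every two involutions on $T$ are conjugate in the group generated by automorphisms and anti-automorphisms of $T$; in particular, there exists $g$, an automorphism or anti-automorphism of $T$, with $\epsilon=g^{-1}\circ E|_T\circ g$. Substituting into $f\circ E=\epsilon\circ f$ gives $g\circ f\circ E=E|_T\circ g\circ f$. Since $g(f(a))\in T$ for every $a\in A_1$ and $E|_T$ agrees with $E$ on $T$, this rearranges to $(gf)\circ E=E\circ(gf)$ as maps $A_1\to A_1$. Hence $gf$ is an $E$-endomorphism of $A_1$ when $g$ is an automorphism of $T$, or an $E$-anti-endomorphism of $A_1$ when $g$ is an anti-automorphism of $T$.

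To finish, I would apply Bell's first step a second time, now on $A_1$ itself, to write $E=\phi^{-1}\alpha\phi$ for some automorphism or anti-automorphism $\phi$ of $A_1$. Conjugating $(gf)\circ E=E\circ(gf)$ by $\phi$ yields $(\phi gf\phi^{-1})\circ\alpha=\alpha\circ(\phi gf\phi^{-1})$, so $\phi gf\phi^{-1}$ is an $\alpha$-endomorphism or an $\alpha$-anti-endomorphism of $A_1$, according to the parities of $g$ and $\phi$. By \cite[Theorem 2.9]{moskowicz valqui} in the endomorphism case and by Proposition \ref{alpha anti-endomorphisms are onto} in the anti-endomorphism case, $\phi gf\phi^{-1}$ is surjective, whence so is $gf$. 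But the image of $gf$ is contained in $g(T)\subseteq T$, forcing $T=A_1$, i.e.\ $f$ is surjective. Since $A_1$ is simple, $f$ is also injective, so $f$ is an automorphism.

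The main obstacle is not any individual algebraic step but the two appeals to Bell's (still unchecked) first step: once applied to $T$ to identify $\epsilon$ with $E|_T$ up to conjugation, and once applied to $A_1$ to identify $E$ with $\alpha$ up to conjugation. One could try to avoid the second invocation by proving a direct analogue of \cite[Theorem 2.9]{moskowicz valqui} and Proposition \ref{alpha anti-endomorphisms are onto} for an arbitrary involution $E$ in place of $\alpha$, but such an analogue would in practice require precisely the same conjugacy input; hence the conditional nature of the result, shared with Theorem \ref{theorem of restriction condition second version}, is unavoidable by this route.
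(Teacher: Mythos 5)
Your proposal is correct and follows essentially the same route as the paper's proof: transport $E$ to the involution $\epsilon$ on $T$, use Bell's first step on $T$ to conjugate $\epsilon$ to $E|_T$ by some $g$, use the conjugacy of $E$ to $\alpha$ on $A_1$, and reduce to the known surjectivity of $\alpha$-endomorphisms and $\alpha$-anti-endomorphisms. If anything, your closing step is more careful than the paper's: since $g$ is only an automorphism or anti-automorphism of $T$ (not of $A_1$), one cannot simply write $f$ as a composition of bijections of $A_1$, and your argument that surjectivity of $gf$ forces $T=A_1$ fills in a detail the paper leaves implicit.
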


\begin{proof}
$f$ satisfies the generalized restriction condition (second version), so there exists an involution $E$ on $A_1$
such that $E$ restricted to $T$ is an involution on $T$. 
Since $T$ is isomorphic to $A_1$ and we ``assume" that Bell's first step is indeed true, it follows that every involution on $T$ is conjugate to one chosen involution on $T$.

Let the restriction of $E$ to $T$ be the ``chosen" involution on $T$.

Write $E(X)= \sum \lambda_{ij}X^iY^j$ and $E(Y)= \sum \mu_{ij}X^iY^j$. 
Its corresponding involution $\epsilon$ on $T$ is defined to be $\epsilon(P):= \sum \lambda_{ij}P^iQ^j$ and 
$\epsilon(Q):= \sum \mu_{ij}P^iQ^j$.

Since $\epsilon$ is an involution on $T$, we get that $\epsilon$ is conjugate to the restriction of $E$ to $T$:
$\epsilon= g^{-1} E g$, where $g$ is an automorphism or an anti-automorphism of $T$ ($E$ is the restriction of $E$ to $T$).

Therefore, we get that $f$ is an $E,\epsilon$-endomorphism of $A_1$ because:

$(f E)(X)= f(\sum \lambda_{ij}X^iY^j)= \sum \lambda_{ij}f(X)^if(Y)^j=$ 

$\sum \lambda_{ij}P^iQ^j= \epsilon(P)= \epsilon(f(X))= (\epsilon f)(X)$ and

$(f E)(Y)= f(\sum \mu_{ij}X^iY^j)= \sum \mu_{ij}f(X)^if(Y)^j=$

$\sum \mu_{ij}P^iQ^j= \epsilon(Q)= \epsilon(f(Y))= (\epsilon f)(Y)$.

Now $E$ is conjugate to $\alpha$ by our ``assumption" in this section that there are no involutions on $A_1$ other than the conjugates of $\alpha$, and $\epsilon= g^{-1} E g$. Write $E= h^{-1} \alpha h$, where $h$ is an automorphism or an anti-automorphism of $A_1$.
Then $f E= \epsilon f$ becomes $f h^{-1} \alpha h= g^{-1} E g f= g^{-1} (h^{-1} \alpha h) g f= g^{-1} h^{-1} \alpha h g f$,
so $f h^{-1} \alpha h= g^{-1} h^{-1} \alpha h g f$.
Then $h g f h^{-1} \alpha= \alpha h g f h^{-1} $: \begin{itemize}

\item If $g$ is an automorphism of $T$, then $h g f h^{-1}$ is an $\alpha$-endomorphism of $A_1$, 

so \cite[Theorem 2.9]{moskowicz valqui} implies that $h g f h^{-1}$ is an ($\alpha$-)automorphism of $A_1$.
Therefore, $f$ is an automorphism of $A_1$.

\item If $g$ is an anti-automorphism of $T$, then $h g f h^{-1}$ is an $\alpha$-anti-endomorphism of $A_1$, 
so Proposition \ref{alpha anti-endomorphisms are onto} implies that 
$h g f h^{-1}$ is an ($\alpha$-)anti-automorphism of $A_1$.

Therefore, $f$ is an automorphism of $A_1$.
\end{itemize}

\end{proof}

Let $\epsilon$ be an involution on $A_1$ and let $w \in A_1$. $w$ is symmetric if $\epsilon(w)= w$, and $w$ is skew-symmetric if $\epsilon(w)= -w$.

Now, in case one of $f(X),f(Y)$ is symmetric or skew-symmetric with respect to $\alpha$, the restriction condition is satisfied by $f$:

\begin{theorem}\label{before important corollary}
Let $f$ be an endomorphism of $A_1$. 
Assume that one of the following conditions is satisfied:\begin{itemize}
\item $P$ is symmetric.
\item $P$ is skew-symmetric.
\item $Q$ is symmetric.
\item $Q$ is skew-symmetric.
\end{itemize}
Where by symmetric or skew-symmetric we mean symmetric or skew-symmetric with respect to $\alpha$.
Then $f$ is an automorphism of $A_1$.
\end{theorem}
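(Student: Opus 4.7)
The plan is to reduce to the restriction condition (second version) by constructing an auxiliary endomorphism whose image is manifestly $\alpha$-stable, and then transferring the conclusion back to $f$ via a centralizer argument.

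First, I would reduce the ``$Q = f(Y)$ symmetric/skew'' case to the ``$P = f(X)$ symmetric/skew'' case: the map $\sigma\colon X \mapsto -Y,\ Y \mapsto X$ is an automorphism of $A_1$, and if $\alpha(Q) = \epsilon Q$ then $f' := f \circ \sigma^{-1}$ has $f'(X) = Q$ and $f'(Y) = -P$, so $f'(X)$ is $\alpha$-symmetric or skew with the same sign $\epsilon$, and $f$ is an automorphism iff $f'$ is. Hence we may assume $\alpha(P) = \epsilon P$ with $\epsilon \in \{+1,-1\}$.

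Next, decompose $Q = Q_s + Q_a$ into its $\alpha$-symmetric and $\alpha$-skew parts, $Q_s := \tfrac{1}{2}(Q + \alpha(Q))$ and $Q_a := \tfrac{1}{2}(Q - \alpha(Q))$. Since $\alpha$ is an \emph{anti}-involution, a direct computation gives $\alpha([Q_s, P]) = -\epsilon\,[Q_s, P]$ and $\alpha([Q_a, P]) = \epsilon\,[Q_a, P]$. Since $[Q,P] = 1$ is $\alpha$-symmetric, matching the symmetric and skew parts of $[Q_s, P] + [Q_a, P] = 1$ forces one of $[Q_s, P], [Q_a, P]$ to equal $1$ and the other to equal $0$: concretely, $[Q_a, P] = 1$ with $Q_s \in C_{A_1}(P)$ when $\epsilon = +1$, and $[Q_s, P] = 1$ with $Q_a \in C_{A_1}(P)$ when $\epsilon = -1$. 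Let $Q_1$ denote whichever of $Q_a, Q_s$ satisfies $[Q_1, P] = 1$, and set $Q_\perp := Q - Q_1$; both $P$ and $Q_1$ are $\pm 1$-eigenvectors of $\alpha$.

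Now consider the auxiliary endomorphism $f_1 \colon A_1 \to A_1$ given by $f_1(X) = P,\ f_1(Y) = Q_1$, whose image is $T_1 = K\langle P, Q_1 \rangle$. Because $\alpha(P) = \pm P \in T_1$ and $\alpha(Q_1) = \pm Q_1 \in T_1$, one has $\alpha(T_1) \subseteq T_1$, so $f_1$ satisfies the restriction condition (second version). Theorem \ref{theorem of restriction condition second version} then yields that $f_1$ is an automorphism of $A_1$, so $T_1 = A_1$; in particular $C_{A_1}(P) = f_1(C_{A_1}(X)) = f_1(K[X]) = K[P]$.

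To conclude, $Q_\perp \in C_{A_1}(P) = K[P] \subseteq T$, whence $Q_1 = Q - Q_\perp \in T$, and therefore $T \supseteq K\langle P, Q_1 \rangle = T_1 = A_1$. So $f$ is surjective, and since $A_1$ is simple every nonzero endomorphism is injective, so $f$ is an automorphism. The main conceptual input is Theorem \ref{theorem of restriction condition second version} itself (which rests on the paper's working assumption that every involution on $A_1$ is conjugate to $\alpha$); the main point that needs care is the parity bookkeeping in step two, where one has to carefully exploit that $\alpha$ is an \emph{anti}-automorphism rather than an automorphism.
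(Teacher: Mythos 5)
Your proposal is correct, but it takes a genuinely different route from the paper's own proof of this theorem. The paper works with $f$ itself: from $[P,\alpha(Q)]=1$ it deduces that $Q+\alpha(Q)$ lies in the centralizer of $P$, invokes the centralizer theorem of \cite{cent} to get $\alpha(Q)=-Q-h(P)\in T$, concludes that $\alpha$ restricted to $T$ is an involution on $T$, and applies Theorem \ref{theorem of restriction condition second version} directly to $f$. You instead split off the eigencomponent $Q_1$ of $Q$ satisfying $[Q_1,P]=1$, apply Theorem \ref{theorem of restriction condition second version} to the auxiliary endomorphism $f_1=(P,Q_1)$, and transfer the conclusion back to $f$; this is essentially the paper's \emph{second} proof of the subsequent Theorem \ref{important corollary}, specialized to $E=\alpha$, with two notable differences. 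First, you recover $C_{A_1}(P)=K[P]$ as a byproduct of $f_1$ being an automorphism instead of citing \cite{cent} -- a nice economy that makes the argument more self-contained (and $C_{A_1}(X)=K[X]$ is indeed elementary). Second, your appeal to Theorem \ref{theorem of restriction condition second version} for $f_1$ is heavier than necessary and imports the paper's unverified conjugacy assumption on involutions: since $[Q_1,P]=1$ forces $P$ and $Q_1$ to be $\alpha$-eigenvectors of \emph{opposite} sign (two symmetric or two skew elements have skew commutator), the pair $\bigl((1/\sqrt2)(S-K),(1/\sqrt2)(S+K)\bigr)$, with $S$ the symmetric and $K$ the (sign-normalized) skew element among $P,Q_1$, is already an $\alpha$-endomorphism and hence an automorphism by \cite[Theorem 2.9]{moskowicz valqui} alone; substituting this for your use of Theorem \ref{theorem of restriction condition second version} would make your argument unconditional, which the paper's proof of this theorem is not. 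Your eigenspace bookkeeping in step two and the reduction of the $Q$-cases via $\sigma$ are both correct.
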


The commutator of two symmetric or two skew-symmetric elements is skew-symmetric, hence it is impossible to have both $P$ and $Q$ symmetric or both $P$ and $Q$ skew-symmetric (since $[Q,P]= 1$ and $1$ is not skew-symmetric).
But it may happen that $P$ is symmetric and $Q$ is skew-symmetric or vice-versa, for example:
$P= \displaystyle\frac{X+Y}2$ and
$Q= Y-X$.

Actually, if $P$ is symmetric and $Q$ is skew-symmetric (or vice-versa), then it is immediate that $f$ is an automorphism; 
Write $P= S$ and $Q= K$ with $S$ symmetric and $K$ skew-symmetric. 
Define $g(X)= (1/\sqrt{2})(S-K)$ and 
$g(Y)=  (1/\sqrt{2})(S+K)$.

$g$ is an endomorphism of $A_1$:
 
$[g(Y),g(X)]= [(1/\sqrt{2})(S+K),(1/\sqrt{2})(S-K)]= (1/2)[S+K,S-K]=$

$(1/2)[S,-K]+(1/2)[K,S]= (1/2)[K,S]+(1/2)[K,S]= [K,S]= [Q,P]= 1$.

$g$ is an $\alpha$-endomorphism of $A_1$: 

$(g \alpha)(X)= g(Y)=  (1/\sqrt{2})(S+K)= (1/\sqrt(2))\alpha(S-K)= \alpha((1/\sqrt{2})(S-K))= \alpha(g(X))= (\alpha g)(X)$,

$(g \alpha)(Y)= g(X)=  (1/\sqrt{2})(S-K)= (1/\sqrt(2))\alpha(S+K)= \alpha((1/\sqrt{2})(S+K))= \alpha(g(Y))= (\alpha g)(Y)$.

{}From \cite[Theorem 2.9]{moskowicz valqui} $g$ is an automorphism of $A_1$, hence it is clear that $f$ is an automorphism of $A_1$.
\begin{proof}
Assume that $P$ is symmetric, namely $\alpha(P)= P$.
{}From $[Q,P]= 1$ we get $[P,\alpha(Q)]= [\alpha(P),\alpha(Q)]= \alpha([Q,P])= \alpha(1)= 1$.
Then $[P,-Q-\alpha(Q)]= [P,-Q]-[P,\alpha(Q)]= 1-1= 0$, so $-Q-\alpha(Q)$ is in the centralizer of $P$.
The centralizer of $P$ is the polynomial algebra $K[P]$, see \cite{cent}, 
hence there exists a polynomial $h(t) \in K[t]$ such that $-Q-\alpha(Q)= h(P)$, 
so $\alpha(Q)= -Q-h(P)$.
This implies that $\alpha$ restricted to $T$ is an involution on $T$; indeed,
$\alpha(P)= P \in T$ and $\alpha(Q)= -Q-h(P) \in T$, so $\alpha$ restricted to $T$ is an endomorphism of $T$.
Then it is clear that $\alpha$ restricted to $T$ is an involution on $T$; just for fun:

$\alpha$ restricted to $T$ is of order $2$:
$\alpha(\alpha(Q))= \alpha(-Q-h(P))= \alpha(-Q)+\alpha(-h(P))= -\alpha(Q)-\alpha(h(P))=$
$-(-Q-h(P))-h(P)= Q+h(P)-h(P)= Q$ (of course $\alpha(h(P))= h(P)$, since $\alpha(P)= P$).

Therefore, $f$ satisfies the restriction condition (second version), so from Theorem \ref{theorem of restriction condition second version} $f$ is an automorphism of $A_1$.

Showing that each of the other three conditions implies that $f$ is an automorphism of $A_1$, is similar.
\end{proof}

\begin{theorem}\label{important corollary}
Let $f$ be an endomorphism of $A_1$. 
Assume that one of the following conditions is satisfied:\begin{itemize}
\item $P$ is symmetric.
\item $P$ is skew-symmetric.
\item $Q$ is symmetric.
\item $Q$ is skew-symmetric.
\end{itemize}
Where by symmetric or skew-symmetric we mean symmetric or skew-symmetric with respect to any involution on $A_1$.
Then $f$ is an automorphism of $A_1$.
\end{theorem}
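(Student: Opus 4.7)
My plan is to extend the argument used in the proof of Theorem~\ref{before important corollary} to an arbitrary involution $\sigma$ on $A_1$, and then conclude via the generalized restriction condition (second version). Concretely, I would show that each of the four hypotheses on $P$ or $Q$ forces $\sigma$ to restrict to an involution on $T$, so that Theorem~\ref{generalized theorem of restriction condition second version} applies directly and yields that $f$ is an automorphism.

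Take the representative case $\sigma(P) = P$. First I apply the anti-involution $\sigma$ to $QP - PQ = 1$ to obtain $\sigma(P)\sigma(Q) - \sigma(Q)\sigma(P) = P\sigma(Q) - \sigma(Q)P = 1$, i.e.\ $[P, \sigma(Q)] = 1$. Combining with $[P, -Q] = 1$ gives $[P, -Q - \sigma(Q)] = 0$, so $-Q - \sigma(Q)$ lies in the centralizer of $P$ inside $A_1$. The centralizer theorem of~\cite{cent} applies because $[Q, P] = 1$, yielding $C_{A_1}(P) = K[P]$, and hence $\sigma(Q) = -Q - h(P)$ for some $h(t) \in K[t]$. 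Both $\sigma(P) = P$ and $\sigma(Q) = -Q - h(P)$ therefore lie in $T$, so $\sigma$ restricts to an anti-endomorphism of $T$; the short order-two check carried out at the end of the proof of Theorem~\ref{before important corollary} transports verbatim (using $\sigma(h(P)) = h(P)$). This means that $f$ satisfies the generalized restriction condition (second version) with $E = \sigma$, and Theorem~\ref{generalized theorem of restriction condition second version} immediately yields that $f$ is an automorphism of $A_1$.

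The remaining three cases ($P$ skew-symmetric, $Q$ symmetric, $Q$ skew-symmetric) are handled by the same scheme: apply $\sigma$ to $[Q, P] = 1$, use the appropriate (skew-)symmetry hypothesis to rewrite the resulting commutator, and invoke the centralizer theorem for $P$ or for $Q$ to force the $\sigma$-image of the ``other'' generator to land in $T$. The only ingredient beyond the $\alpha$-case is the centralizer identification, which depends only on the Weyl relation $[Q, P] = 1$ and not on $\sigma$. The main obstacle is therefore not any new calculation but rather the fact that Theorem~\ref{generalized theorem of restriction condition second version} itself rests on Bell's ``first step'' assumption; conditional on that step the argument is uniform across all four cases.
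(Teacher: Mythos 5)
Your argument is correct and coincides with the paper's own (first) proof of this theorem: apply the involution to $[Q,P]=1$, deduce $-Q-E(Q)\in C_{A_1}(P)=K[P]$ via \cite{cent}, conclude $E(Q)=-Q-h(P)\in T$ so that $E$ restricts to an involution on $T$, and invoke Theorem \ref{generalized theorem of restriction condition second version} (which, as you correctly flag, is where the dependence on Bell's first step enters). The paper additionally records a second, independent proof via the symmetric/skew-symmetric decomposition $Q=S_2+K_2$, but your route is exactly its primary one.
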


\begin{proof}
Assume that $P$ is symmetric with respect to $E= g^{-1}\alpha g$ ($g$ is an automorphism or an anti-automorphism of $A_1$), namely $E(P)= P$.
{}From $[Q,P]= 1$ we get $[P,E(Q)]= [E(P),E(Q)]= E([Q,P])= E(1)= 1$.
Then $[P,-Q-E(Q)]= [P,-Q]-[P,E(Q)]= 1-1= 0$, so $-Q-E(Q)$ is in the centralizer of $P$.
The centralizer of $P$ is the polynomial algebra $K[P]$, see \cite{cent}, 
hence there exists a polynomial $h(t) \in K[t]$ such that $-Q-E(Q)= h(P)$, 
so $E(Q)= -Q-h(P)$.
This implies that $E$ restricted to $T$ is an involution on $T$; indeed,
$E(P)= P \in T$ and $E(Q)= -Q-h(P) \in T$, so $E$ restricted to $T$ is an endomorphism of $T$.
Then it is clear that $E$ restricted to $T$ is an involution on $T$; just for fun:

$E$ restricted to $T$ is of order $2$:
$E(E(Q))= E(-Q-h(P))= E(-Q)+E(-h(P))= -E(Q)-E(h(P))=$
$-(-Q-h(P))-h(P)= Q+h(P)-h(P)= Q$ (of course $E(h(P))= h(P)$, since $E(P)= P$).

Therefore, $f$ satisfies the generalized restriction condition (second version), so from Theorem \ref{generalized theorem of restriction condition second version} $f$ is an automorphism of $A_1$.

Showing that each of the other three conditions implies that $f$ is an automorphism of $A_1$, is similar.

\textbf{Second proof, independent of the generalized restriction condition}:

Assume that $P$ is symmetric with respect to $E= g^{-1}\alpha g$ ($g$ is an automorphism or an anti-automorphism of $A_1$), namely $E(P)= P$.

Write: $P= S_1$ and $Q= S_2+K_2$, where $S_1$ and $S_2$ are symmetric and $K_2$ is skew-symmetric, with respect to $E$.

{}From $1= [Q,P]$ we get $1= [S_2+K_2,S_1]= [S_2,S_1]+[K_2,S_1]$.
$[S_2,S_1]$ is skew-symmetric and $[K_2,S_1]$ is symmetric, hence
$[S_2,S_1]= 0$ and $[K_2,S_1]= 1$.

$[S_2,S_1]= 0$, so $S_2$ is in the centralizer of $S_1$. 
The centralizer of $S_1$ is the polynomial algebra $K[S_1]$, see \cite{cent}, 
hence there exists a polynomial $h(t) \in K[t]$ such that $S_2= h(S_1)$.

$[K_2,S_1]= 1$, so $g(X):= (1/\sqrt(2))(S_1-K_2)$ and
$g(Y):= (1/\sqrt(2))(S_1+K_2)$ defines an $E$-endomorphism of $A_1$, which is an automorphism of $A_1$
(by Lemma \ref{gamma,delta is onto}). Then obviously $S_1$ and $K_2$ generate all $A_1$.

Of course, $K_2= S_2+K_2-S_2= Q-h(S_1)= Q-h(P)$ is in $T$.
So $S_1$ and $K_2$ are in $T$, which implies that $T= A_1$, hence $f$ is an automorphism of $A_1$.

Showing that each of the other three conditions implies that $f$ is an automorphism of $A_1$, is similar.

\end{proof}
The first condition in Theorem \ref{important corollary} is equivalent to the condition that there exists an automorphism or an anti-automorphism $g$ of $A_1$ such that $g(P)$ is symmetric with respect to $\alpha$ (and similarly the other three conditions).

\begin{proposition}\label{prop E alpha}
Let $f$ be an endomorphism of $A_1$.
The following conditions are equivalent:\begin{itemize}
\item [(1)] There exists an involution $E$ on $A_1$ such that $P$ is symmetric (or skew-symmetric) with respect to $E$.
\item [(2)] There exists an automorphism or an anti-automorphism $g$ of $A_1$ such that $g(P)$ is symmetric (or skew-symmetric) with respect to $\alpha$.
\end{itemize}
\end{proposition}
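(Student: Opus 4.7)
The plan is to treat the two implications separately. The key observation is that the section operates under the standing ``assumption'' (Bell's first step) that every involution on $A_1$ is conjugate to $\alpha$ by some automorphism or anti-automorphism of $A_1$; this is exactly the tool that lets one translate between an arbitrary involution $E$ and the exchange involution $\alpha$.

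For the easy direction $(2) \Rightarrow (1)$, which needs no assumption, I would simply define $E := g^{-1}\alpha g$ from the given $g$. Since $g$ is an automorphism or an anti-automorphism and $\alpha$ is an anti-automorphism of order two, both possibilities for $g$ make $E$ an anti-automorphism (either auto $\circ$ anti $\circ$ auto, or anti $\circ$ anti $\circ$ anti, in both cases anti), and $E^2 = g^{-1}\alpha^2 g = \ide$; hence $E$ is an involution on $A_1$. A one-line check
\[
E(P) \;=\; g^{-1}(\alpha(g(P))) \;=\; g^{-1}(\pm g(P)) \;=\; \pm P
\]
yields the desired $E$-symmetry (respectively skew-symmetry) of $P$.

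For the other direction $(1) \Rightarrow (2)$, given an involution $E$ with $E(P) = \pm P$, I would invoke the section's assumption to write $E = g^{-1}\alpha g$ for some automorphism or anti-automorphism $g$ of $A_1$. Then $g^{-1}(\alpha(g(P))) = E(P) = \pm P$, and applying $g$ to both sides yields $\alpha(g(P)) = \pm g(P)$, which is exactly statement $(2)$.

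The whole argument is essentially bookkeeping once Bell's first step is in hand; the only ``obstacle'' worth flagging is that $(1) \Rightarrow (2)$ genuinely depends on that unchecked assumption, while $(2) \Rightarrow (1)$ is entirely unconditional. No new computation with the defining relation of $A_1$ enters.
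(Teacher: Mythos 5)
Your proposal is correct and follows essentially the same route as the paper: both directions amount to conjugating by $g$, with $(1)\Rightarrow(2)$ resting on the section's standing ``assumption'' that every involution on $A_1$ is conjugate to $\alpha$, exactly as the paper's proof does. The only difference is that you additionally verify that $g^{-1}\alpha g$ is an involution in the $(2)\Rightarrow(1)$ direction, a small check the paper leaves implicit.
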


Of course, the analogue result when taking $Q$ instead of $P$, is also true.

\begin{proof}
$(1)\Rightarrow(2)$: Let $E$ be an involution on $A_1$ such that $P$ is symmetric with respect to $E$, namely
$E(P)= P$. By our ``assumption" $E$ is conjugate to $\alpha$, so there exists an automorphism or an anti-automorphism $g$ of $A_1$ such that $E= g^{-1} \alpha g$. Hence, $E(P)= P$ becomes $(g^{-1} \alpha g)(P)= P$, so
$\alpha (g(P))= g(P)$, which means that $g(P)$ is symmetric with respect to $\alpha$.

The skew-symmetric case is similar.

$(2)\Rightarrow(1)$: Let $g$ be an automorphism or an anti-automorphism of $A_1$ such that $g(P)$ is symmetric with respect to $\alpha$, namely $\alpha (g(P))= g(P)$. 
Then $(g^{-1} \alpha g)(P)= P$, which means that $P$ is symmetric with respect to the involution $g^{-1} \alpha g$ on $A_1$.

The skew-symmetric case is similar.
\end{proof}

{}From Proposition \ref{prop E alpha}, given an endomorphism $f$ of $A_1$, one wishes to find linear and triangular automorphisms and linear and triangular anti-automorphisms $g_1,\ldots,g_n$ such that $(g_1 \cdots g_n)(P)$ is symmetric or skew-symmetric with respect to $\alpha$.
(Or one wishes to find linear and triangular automorphisms and linear and triangular anti-automorphisms $g_1,\ldots,g_n$ such that $(g_1 \cdots g_n)(Q)$ is symmetric or skew-symmetric with respect to $\alpha$).

Next, the second version of the $g,h$ conjecture is exactly the same as the first version of the $g,h$ conjecture:   

\begin{conjecture}[The $g,h$ conjecture (second version)]\label{second version of the g,h conjecture}
For every endomorphism $f$ of $A_1$, there exist $g$ and $h$, both automorphisms, both anti-automorphisms or one is an automorphism and the other is an anti-automorphism, such that $h f g^{-1}$ is an $\alpha$-endomorphism (when both $g$ and $h$ are automorphisms or both are anti-automorphisms) or an $\alpha$-anti-endomorphism (when one of $g$ and $h$ is an automorphism and the other is an anti-automorphism).
\end{conjecture}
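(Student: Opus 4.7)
The plan begins by recognizing that this conjecture is equivalent to the Dixmier conjecture itself: the text just above notes that the second version of the $g,h$ conjecture is literally the same statement as the first version, and by Theorem \ref{dixmier iff g,h first version} the first version is equivalent to Dixmier. So any proof is necessarily a proof of Dixmier; the honest question is which route to take. The easy direction, that $f$ being an automorphism yields the required $g,h$, is handled by setting $g = f$ and $h = \ide$: then $hfg^{-1} = \ide$, which trivially commutes with $\alpha$.

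For the forward direction, my first move would be to work with the image $T = f(A_1) \cong A_1$ and try to realize the generalized restriction condition of Definition \ref{definition generalized restriction second}: exhibit an involution $E$ of $A_1$ whose restriction to $T$ is still an involution. By Theorem \ref{generalized theorem of restriction condition second version} this already forces $f$ to be an automorphism, and unwinding the proof of that theorem extracts the desired $g$ and $h$ explicitly: $g$ is the (anti-)automorphism of $T$ conjugating the restricted $E$ to the chosen involution on $T$, while $h$ conjugates $E$ to $\alpha$ on all of $A_1$, and the verification $(hgfh^{-1})\alpha = \alpha(hgfh^{-1})$ is carried out there.

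To construct such an $E$, I would try to apply Theorem \ref{important corollary}: it suffices to find a finite sequence of linear and triangular automorphisms and anti-automorphisms $g_1,\dots,g_n$ so that $(g_1\cdots g_n)(P)$ or $(g_1\cdots g_n)(Q)$ is symmetric or skew-symmetric with respect to $\alpha$. The natural strategy is a Dixmier-style induction on the bidegree of $P = f(X)$ in the standard filtration of $A_1$: at each stage, apply a tame (anti-)automorphism chosen to cancel the part of the leading form of $P$ that is antisymmetric under the exchange action, iterating until one lands in an $\alpha$-(skew-)symmetric element and then pulling $E = \alpha$ back through the reduction.

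The main obstacle is exactly the obstacle to the Dixmier conjecture itself: there is no known termination argument guaranteeing that such a reduction succeeds, and the leading forms at successive steps interact via Poisson brackets in the standard way that has blocked all previous elementary attacks. A secondary, more technical obstacle explicitly flagged in the paper is Bell's first step on the amalgamated free product structure of the group generated by automorphisms and anti-automorphisms of $A_1$; without it one cannot identify the two versions of the conjecture, so the conjugacy constraints of the first version reappear inside the reduction argument and tighten the space of admissible $g$ and $h$.
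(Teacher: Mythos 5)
There is no proof in the paper to compare against: the statement you are addressing is posed as a \emph{conjecture}, and the paper's only result about it is Theorem \ref{dixmier iff g,h second version}, which establishes its equivalence with the Dixmier conjecture. Your proposal correctly recognizes this, and your easy direction ($g=f$, $h=\ide$) matches the paper's own observation. But what you have written for the forward direction is a strategy sketch with an acknowledged hole where the theorem should be, and that hole is the entire content of the conjecture. The chain of reductions you describe --- realize the generalized restriction condition of Definition \ref{definition generalized restriction second}, invoke Theorem \ref{generalized theorem of restriction condition second version}, and feed it via Theorem \ref{important corollary} by making $(g_1\cdots g_n)(P)$ symmetric or skew-symmetric under $\alpha$ --- is faithful to the paper's machinery, but the final step (``a Dixmier-style induction on the bidegree \ldots iterating until one lands in an $\alpha$-(skew-)symmetric element'') is asserted without any termination argument, and you say so yourself. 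Naming the obstacle is not a proof; as it stands the proposal proves nothing beyond the trivial direction.

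Two further points are worth flagging. First, even the conditional scaffolding you lean on is itself conditional: Theorem \ref{generalized theorem of restriction condition second version} and the identification of the two versions of the conjecture both ``assume'' Bell's first step (that the group generated by automorphisms and anti-automorphisms of $A_1$ is an amalgamated free product, whence all involutions are conjugate to $\alpha$), which the paper explicitly states has not been checked; and Proposition \ref{alpha anti-endomorphisms are onto} is only proved by analogy with the $\alpha$-endomorphism case. So your route stacks an open reduction on top of an unverified structural claim. Second, if your goal were merely to record that the conjecture is equivalent to the Dixmier conjecture --- which is all the paper itself establishes --- the argument is much shorter than your proposal suggests: the forward direction of Theorem \ref{dixmier iff g,h first version} follows immediately from taking $g=f$, $h=\ide$ once Dixmier gives that $f$ is an automorphism, with no need to pass through the restriction-condition machinery at all.
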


\begin{theorem}[Equivalence of the Dixmier conjecture and the $g,h$ conjecture (second version)]\label{dixmier iff g,h second version}
The Dixmier conjecture is true $\Longleftrightarrow$ The $g,h$ conjecture (second version) is true.
\end{theorem}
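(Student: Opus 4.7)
The plan is to mirror the argument of Theorem \ref{dixmier iff g,h first version}, which (as the paper notes) already contains the substance because the $g,h$ conjecture is stated identically in the two versions. No conjugacy-of-involutions hypothesis appears in the $g,h$ conjecture, so Bell's first step is not needed here at all; the only machinery invoked is \cite[Theorem 2.9]{moskowicz valqui} and Proposition \ref{alpha anti-endomorphisms are onto}.

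For the direction $\Longrightarrow$, I would fix an arbitrary endomorphism $f$ of $A_1$ and use Dixmier to conclude $f$ is an automorphism. Setting $g := f$ and $h := \ide$, both automorphisms, gives $h f g^{-1} = \ide$, which trivially commutes with $\alpha$ and is therefore an $\alpha$-endomorphism. This exhibits the required pair $(g,h)$ for $f$, proving the $g,h$ conjecture (second version).

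For the direction $\Longleftarrow$, I would fix an arbitrary endomorphism $f$ of $A_1$ and invoke the $g,h$ conjecture to produce $g,h$ of one of the three prescribed types, together with the corresponding assertion that $h f g^{-1}$ is either an $\alpha$-endomorphism or an $\alpha$-anti-endomorphism. In the first case \cite[Theorem 2.9]{moskowicz valqui} promotes $h f g^{-1}$ to an $\alpha$-automorphism, and in the second case Proposition \ref{alpha anti-endomorphisms are onto} promotes it to an $\alpha$-anti-automorphism. Either way $h f g^{-1}$ is invertible on $A_1$, and therefore $f = h^{-1} (h f g^{-1}) g$ is a composition of invertible $K$-linear maps, so in particular $f$ is a bijection and hence an automorphism of $A_1$.

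The only thing needing genuine verification is parity: one must check in each of the three configurations that the composition $h^{-1} (h f g^{-1}) g$ contains an even number of anti-automorphism factors, so that $f$ is an automorphism and not an anti-automorphism. This is precisely the case analysis already carried out inside Lemma \ref{gamma,delta is onto}, and the pairing in the statement of the $g,h$ conjecture is arranged so that it always works: when $g,h$ are both automorphisms or both anti-automorphisms, $h f g^{-1}$ is an $\alpha$-endomorphism (giving $0$ or $2$ anti-automorphism factors in the expression for $f$), and when exactly one of $g,h$ is an anti-automorphism, $h f g^{-1}$ is an $\alpha$-anti-endomorphism (contributing one more anti-automorphism, for a total of $2$). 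This bookkeeping is the entirety of the "hard part," and there is no genuine obstacle beyond it.
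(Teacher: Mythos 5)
Your proposal is correct and follows essentially the same route the paper intends: since the $g,h$ conjecture (second version) is verbatim the first version, the argument is exactly the one underlying Theorem \ref{dixmier iff g,h first version} and the discussion after Proposition \ref{gamma,delta iff hfg alpha} --- the trivial choice $g=f$, $h=\ide$ for the forward direction, and \cite[Theorem 2.9]{moskowicz valqui} together with Proposition \ref{alpha anti-endomorphisms are onto} to make $hfg^{-1}$, and hence $f=h^{-1}(hfg^{-1})g$, bijective for the converse. Your parity bookkeeping is harmless but not strictly needed, since a bijective algebra endomorphism of $A_1$ is automatically an automorphism.
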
          
\section{Analogous results for $K[x,y]$}

The above Bell's idea concerning involutions on $A_1$ can be applied to $K[x,y]$.

Apriori, we define an involution $\gamma$ on $k[x,y]$ as an automorphism of order $2$, without demanding that the Jacobian of $\gamma(x)$, $\gamma(y)$ will equal $-1$ (of course, the Jacobian of $\gamma(x)$, $\gamma(y)$ is a non-zero scalar, since $\gamma$ is invertible). However, Bell's idea shows that $\gamma$ is conjugate to $\alpha$, hence we get for free that the Jacobian of $\gamma(x)$, $\gamma(y)$ equals $-1$ (since the Jacobian of $\alpha(x)$, $\alpha(y)$ equals $-1$).

\begin{proposition}\label{bell jacobian}
There are no involutions on $K[x,y]$ other than the conjugates of $\alpha$.
\end{proposition}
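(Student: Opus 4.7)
The plan is to imitate Bell's two-step argument for $A_1$ sketched in Section 3, but with the first step now unconditional: by the Jung--van der Kulk theorem, $\Aut(K[x,y]) = \mathrm{Aff} \ast_C \mathrm{Tri}$ is the amalgamated free product of the affine subgroup $\mathrm{Aff}$ and the triangular (de Jonqui\`eres) subgroup $\mathrm{Tri}$ over their intersection $C$. Since $K[x,y]$ is commutative, every anti-automorphism is already an automorphism, so the enlargement of the automorphism group considered in the $A_1$ case collapses to $\Aut(K[x,y])$ itself and no auxiliary structure theorem is needed.

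Given an involution $\gamma$, I would first apply the Bass--Serre corollary cited in the previous section: every torsion element of an amalgamated free product is conjugate into one of the two factors, so after replacing $\gamma$ by a conjugate one may assume $\gamma \in \mathrm{Aff}$ or $\gamma \in \mathrm{Tri}$. The task then reduces to classifying order-two elements in each factor up to $\Aut(K[x,y])$-conjugacy. For $\mathrm{Tri}$, writing $\tau(x) = ax + p(y)$, $\tau(y) = by + c$, the identity $\tau^2 = \mathrm{id}$ imposes $a^2 = b^2 = 1$, $c(b+1) = 0$, and $a\,p(y) + p(by + c) = 0$; a short case analysis on the signs of $(a,b)$, followed in each case by a triangular conjugation (a translation in $y$ to kill $c$, then a translation in $x$ to absorb $p$), reduces every non-trivial $\tau$ to a linear model such as $(x,y) \mapsto (-x,y)$, which is $\Aut(K[x,y])$-conjugate to $\alpha$ by an explicit affine change of variables (swap coordinates, then conjugate by $x \mapsto x+y$, $y \mapsto x-y$). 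For $\mathrm{Aff}$, writing $\sigma(v) = Mv + b$, the condition $\sigma^2 = \mathrm{id}$ gives $M^2 = I$ and $(M+I)b = 0$; a translation conjugates $\sigma$ to its linear part, and the classification of order-two matrices in $GL_2(K)$ by eigenvalue pattern finishes the case.

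The step I expect to be the real obstacle is the case $M = -I$ in the affine classification. The automorphism $v \mapsto -v$ has order two but Jacobian determinant $+1$, whereas the Jacobian of $\alpha$ is $-1$ and is invariant under conjugation by any automorphism; hence $(x,y) \mapsto (-x,-y)$ is \emph{not} conjugate to $\alpha$ in $\Aut(K[x,y])$. To save the proposition as literally stated, one must either sharpen the definition of ``involution'' to include the Jacobian-sign condition $\mathrm{Jac}(\gamma(x),\gamma(y)) = -1$ from the outset (so that the Jacobian-sign conclusion drawn from Bell's proposition in the paper becomes a hypothesis rather than a consequence), or explicitly widen the statement to include the additional conjugacy class of $-I$. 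This is the commutative analogue of the claim ``no order-two elements in $\tilde B$'' used for $A_1$, and unlike that setting such elements genuinely exist in $\mathrm{Aff}$, so they are the single delicate point that the proof has to address head-on.
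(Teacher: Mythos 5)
Your proposal follows the same route as the paper's proof: the Jung--van der Kulk theorem gives the amalgamated free product decomposition of $\Aut(K[x,y])$, the Bass--Serre corollary conjugates the involution into one of the two factors, and one then classifies order-two elements of each factor up to conjugacy. Where you differ is that you actually carry out the classification in the factors, and in doing so you have put your finger on a genuine error in the paper. The paper's proof asserts that ``there are no elements of order $2$ in $B$'' and that ``any order $2$ element $a \in A$ is conjugate to $\alpha$''; both assertions fail for $\sigma:(x,y)\mapsto(-x,-y)$, which lies in $A\cap B$, has order two, and is \emph{not} conjugate to $\alpha$: the Jacobian determinant is a homomorphism from $\Aut(K[x,y])$ to the abelian group $K^*$ (by the chain rule, since the Jacobian of an automorphism is a nonzero constant), hence a conjugacy invariant, and $\Jac(\sigma)=1\neq -1=\Jac(\alpha)$. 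Since the paper explicitly defines an involution of $K[x,y]$ as an arbitrary order-two automorphism and claims to recover the condition $\Jac=-1$ ``for free,'' Proposition \ref{bell jacobian} is false as stated, and your diagnosis of the $M=-I$ case as the single delicate point is exactly right. Your proposed repairs --- either build $\Jac(\gamma(x),\gamma(y))=-1$ into the definition of involution from the outset, or enlarge the conclusion to allow the additional conjugacy class of $-I$ --- are the correct ones, and the homomorphism argument shows the two classes can never merge, so one of these changes is unavoidable. The remaining steps of your case analysis (translations killing $c$ and absorbing $p(y)$ in the triangular factor, reduction of an affine involution to its linear part, and the eigenvalue classification of order-two matrices over a characteristic-zero field) are sound and supply details the paper's proof omits.
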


\begin{proof}
Since $K[x,y]$ is commutative, anti-automorphisms are automorphisms, so every involution on $K[x,y]$ is an order $2$ automorphism.

It is well-known that the group of automorphisms of $K[x,y]$ is the amalgamated free product of $A$ and $B$ over their intersection, where $A$ is the affine automorphisms and $B$ is the triangular automorphisms;
this result appeared in many articles whose authors are: van der Kulk, Kambayashi, Nagata, Dicks, Alperin and Shafarevich (the list of articles can be found in \cite[page 114]{van den essen}).

Therefore, one can immediately apply Bass-Serre theorem \cite[Corollary 1, page 6]{trees} to a given involution $\gamma$ on $K[x,y]$ and get that $\gamma$ is conjugate to $\alpha$; indeed, $\gamma$ is of order $2$, so it is conjugate to an order $2$ element $a \in A$ or it is conjugate to an order $2$ element $b \in B$. Since there are no elements of order $2$ in $B$, $\gamma$ is conjugate to an order $2$ element $a \in A$. Finally, any order $2$ element $a \in A$ is conjugate to $\alpha$. 
\end{proof}

We can obtain for $K[x,y]$ similar results to the results of $A_1$. We leave to the reader the task of checking this.

We just mention that it seems that now we have two advantages in comparison to $A_1$: 
\begin{itemize}
\item By Proposition \ref{bell jacobian}, we do know that there are no involutions on $A_1$ other than the conjugates of $\alpha$, so we can obtain the results for $K[x,y]$ immediately from the second version results of $A_1$.

In particular, every morphism $f$ from the ring with involution $(K[x,y],\gamma)$ to the ring with involution $(K[x,y],\delta)$ that satisfies $\Jac(f(x),f(y)) \in K^*$ is invertible.

\item The problem of extending an involution from $K[p,q]$ to $K[x,y]$ (given a morphism $f$ of $K[x,y]$ with $p= f(x)$ and $q= f(y)$) seems a little less difficult problem, since now we deal with commutative rings. However, the condition on the Jacobian may add some new difficulty, since we wish that the Jacobian of the extension will equal $-1$.
\end{itemize}

The Jacobian conjecture for $K[x,y]$ implies the Dixmier conjecture for $A_1$.
This was proved by Tsuchimoto \cite{tsuchimoto}, Belov and Kontsevich \cite{belov} and Bavula \cite{bavula jacobian}
(actually, the generalized version was proved in those papers).

Therefore, if one can prove that the $\gamma,\delta$ conjecture for $K[x,y]$ is true (hence by our results the Jacobian conjecture for $K[x,y]$ is true), then the dixmier conjecture for $A_1$ is true.

Finally, we suggest to check if all the results brought in this article are applicable to Poisson algebras.
If so, in view of \cite{adja}, it will be interesting to see if the $\gamma,\delta$ conjecture is true there. 

\section{Acknowledgements}
I wish to thank Prof. C. Valqui for working with me on the starred Dixmier conjecture \cite{moskowicz valqui}, Prof. J. Bell for his note \cite{bell} concerning involutions on $A_1$, and Prof. L. Rowen and Prof. U. Vishne for listening to my ideas and commenting on them.

\bibliographystyle{plain}

\end{document}